\newtheorem{thm}{Theorem}[section]
\newtheorem{prop}[thm]{Proposition}
\newtheorem{cor}[thm]{Corollary}
\newtheorem{defin}[thm]{Definition}
\newtheorem{rem}[thm]{Remark}
\def\R{{\mathbb{R}}}
\def\disfrac#1#2{{\displaystyle{\frac{#1}{#2}  }}}
\def\cat{{\rm{cat}\hskip1pt}}
\def\secat{{\rm{secat}\hskip1pt}}
\def\nil{{\rm{nil}\hskip1pt}}
\def\TC{{\rm{TC}\hskip1pt}}
\title{Q-Topological Complexity}
\author[L. Fern\'andez Su\'arez, L. Vandembroucq]{Luc\'{\i}a Fern\'andez Su\'arez and Lucile Vandembroucq}
\thanks{This research has been supported by Portuguese Funds from the ``Funda\c c\~ao para a Ci\^encia e a Tecnologia'', through the Project UID/MAT/0013/2013.}
\subjclass[2010]{55M30}  
\keywords{sectional category, topological complexity.}
\begin{document}

\begin{abstract}
By analogy with the invariant Q-category defined by Scheerer, Stanley and Tanr\'e, we introduce the notions of Q-sectional category and Q-topological complexity. We establish several properties of these invariants. We also obtain a formula for the behaviour of the sectional category with respect to a fibration which generalizes the classical formulas for Lusternik-Schnirelmann category and topological complexity. 
\end{abstract}

\maketitle


\section{Introduction}

 The $Q$-\textit{category} of a topological space $X$, denoted by $Q\cat X$, is a lower bound for the Lusternik-Schnirelmann category of $X$, $\cat X$, which has been introduced by H. Scheerer, D. Stanley, and D. Tanr\'e in \cite{SST}. This invariant, defined using a fibrewise extension of a functor $Q^k$ equivalent to $\Omega^k\Sigma^k$,  has been in particular used in the study of critical points (see \cite[Chap. 7]{CLOT}, \cite{MoyauxV}) and in the study of the Ganea conjecture. More precisely, after N. Iwase \cite{Iwase} showed that the Ganea conjecture, which asserted that $\cat (X\times S^n) =\cat X +1$ for any $n\geq 1$, was not true in general, although it was known to be true for many classes of spaces (e.g. \cite{Singhof}, \cite{Jessup}, \cite{Hess}, \cite{Rudyak}, \cite{Strom}), one could ask for a complete characterization of the spaces $X$ satisfying the equality above. In \cite{SST}, H.  Scheerer, D.  Stanley, and D.  Tanr\'e conjectured that a finite  CW-complex $X$ satisfies the Ganea conjecture, that is the equality $\cat (X\times S^n)
=\cat X +1$ holds for any $n\geq 1$, if and only if $Q\cat X =\cat X$. One direction of this equivalence has been proved in \cite{Qcat} but the complete answer is still unknown.

In this paper we introduce the analogue of $Q$-category for Farber's topological complexity \cite{Farber} and establish some properties of this invariant. Since L.-S. category and topological complexity are both special cases
of the notion of \emph{sectional category}, introduced by A. Schwarz in \cite{Schwarz}, we naturally consider and study a notion of $Q$-sectional category. Our definition, given in Section \ref{DefQsecat}, is based on a generalized notion of Ganea fibrations and on a fibrewise extension of the functor $Q^k$ that we respectively recall in Sections \ref{Ganeafibrations} and \ref{Qconstruction}. We next, in Section \ref{properties}, establish various formulas for $Q$-sectional category and $Q$-topological complexity. 
It is worth noting that our study of the behaviour of the $Q$-sectional category in a fibration led us to establish a new formula for the sectional category (see Theorem \ref{secatfibration}), which generalizes both the classical formula for the LS-category in a fibration and the formula established by M. Farber and M. Grant for topological complexity \cite{FarberGrant}.
Finally, we use our results to study some examples. This permits us in particular to observe that the analogue of the Scheerer-Stanley-Tanr\'e conjecture for topological complexity is not true. We also include a small observation about the original Scheerer-Stanley-Tanr\'e conjecture.

Throughout this text we work in the category of compactly generated Hausdorff spaces having the homotopy type of a CW-complex.
\section{Q-sectional category and Q-topological complexity}

\subsection{Sectional category and Ganea fibrations}\label{Ganeafibrations}
Let $f: A\to X$ be a map where $X$ is a well-pointed path-connected space with base point $\ast \in X$. The \emph{sectional category} of $f$,
$\secat(f)$, is the least integer $n$ (or $\infty$) for which there exists an open cover 
$U_0,\ldots, U_n$ of $X$ such that, for any $0\leq i\leq n$, $f$ admits a local homotopy section on $U_i$ (that is, a continuous map $s_i:U_i\to A$ such that $f\circ s_i$ is homotopic to the inclusion $U_i\hookrightarrow X$).
When $f$ is a fibration, we can, equivalently, require local strict sections instead of local homotopy sections. As special cases of sectional category, Lusternik-Schnirelmann category and Farber's topological complexity are respectively given by
\begin{itemize}
\item $ \cat(X)=\secat(ev_1:PX\to X)=\secat(\ast \to X)$ 

\noindent where $PX\subset X^I$ is the space of paths beginning at the base point $*$ and $ev_1$ is the evaluation map at the end of the path,
\item $\TC(X)=\secat(ev_{0,1}:X^I\rightarrow X\times X) =\secat(\Delta:X\to X\times X)$

\noindent where $ev_{0,1}$ evaluates a path at its extremities and $\Delta$ is the diagonal map.
\end{itemize} 
As is well-known, if $f:A\to X$ and $g:B\to Y$ are two maps with homotopy equivalences $A\stackrel{\sim}{\to} B$ and $X\stackrel{\sim}{\to} Y$ making the obvious diagram commutative then $\secat(f)=\secat(g)$. Also $\secat(f)$ can be characterized through the existence of a global section for a certain join map which can be, for instance, explicitly constructed through an iterated fibrewise join of $f$ when $f$ is a fibration (\cite{Schwarz}). Here we will assume (without loss of generality) that $f:A\to X$ is a (closed) pointed cofibration and consider the following constructions which give us a natural and explicit fibration, the Ganea fibration of $f$, equivalent to the join map characterizing $\secat(f)$:

\begin{itemize}
\item the fatwedge of $f$ (\cite{Fasso}, \cite{weaksecat}):
$$T^ n(f)=\{(x_0,\dots,x_n)\in X^{n+1} ~|~  \exists j, x_j\in f(A)\}$$
\noindent which generalizes the classical fat-wedge $$T^ n(X)=\{(x_0,\dots,x_n)\in X^{n+1} ~|~ \exists j, x_j= *\},$$
\item the space $\Gamma_nX=\left\{(\gamma_0,\dots,\gamma_n)\in \left(
X^{I}\right)^{n+1}~|~\gamma_0(0)=\dots=\gamma_n(0)\right\}$\\

\noindent together with the fibration $$\Gamma_nX\to X, \quad (\gamma_0,\dots,\gamma_n)\mapsto \gamma_0(0)=\dots=\gamma_n(0)$$ which is a homotopy equivalence,\\

\item the $n$th Ganea fibration of $f$, $g_n(f):G_n(f)\to X$, which is obtained by pull-back along the diagonal map $\Delta_{n+1}:X\to X^{n+1}$ of the fibration associated to the inclusion $T^n(f)\hookrightarrow X^{n+1}$ and explicitly given by:
$$\begin{array}{rcl}
G_n(f)=\left\{(\gamma_0,\dots,\gamma_n)\in \Gamma_n(X)~|~ \exists j,~\gamma_j(1)\in f(A)\right\}&\to& X\\
(\gamma_0,\dots,\gamma_n)&\mapsto &\gamma_0(0)=\dots=\gamma_n(0).
\end{array}$$ 
\end{itemize}

\noindent All these spaces are considered with the obvious base points $(\ast,\ldots,\ast)\in T^n(f)$ and $(\hat{\ast},\ldots,\hat{\ast})\in G_n(f)\subset \Gamma_n(X)$ (where $\hat{\ast}$ denotes the constant path). By construction, there exists a commutative diagram in which the square is a (homotopy) pull-back.
$$\xymatrix{
G_n(f)\ar[r]\ar[d]_{g_n(f)} & \bullet \ar@{->>}[d] & \ar[l]_{\sim} T^ n(f)\ar@{^(->}[dl] \\
X \ar[r]_{\Delta_{n+1}}     & X^ {n+1}            }$$
By \cite{Fasso}, we know that $\secat(f)$ is the least integer $n$ such that the diagonal map $\Delta_{n+1}:X\to X^{n+1}$ lifts up to homotopy in the fatwedge of $f$, $T^n(f)$. By the homotopy pull-back diagram above, this is equivalent to say that $\secat(f)$ is the least integer $n$ such that the fibration $g_n(f)$ admits a (homotopy) section. By \cite[Th. 8]{weaksecat}, the fibration $g_n(f)$ is equivalent to the (fibrewise) join of $n+1$ copies of $f$ or of any map weakly equivalent to $f$. The fibre of $g_n(f)$, denoted by $F_n(f)$, is homotopically equivalent to the (usual) join of $n+1$ copies of the homotopy fibre $F$ of $f$.

When $f$ is the inclusion $\ast \to X$ (which is a cofibration since $X$ is well-pointed) we recover a possible description of the classical Ganea fibration of $X$ and we will use, in that case, the classical notation
$$g_n(X):G_n(X)=\left\{(\gamma_0,\dots,\gamma_n)\in \Gamma_n(X)~:~ \exists j,~\gamma_j(1)=\ast
\right\}\to X.$$ We have  
$$\cat(X)\leq n \quad \Leftrightarrow \quad g_n(X):G_n(X)\to X \mbox{ admits a (homotopy) section}.$$ 
When $X$ is a CW-complex, $f=\Delta:X\to X\times X$ is a cofibration and we have
$$\TC(X)\leq n \quad \Leftrightarrow \quad g_n(\Delta):G_n(\Delta)\to X\times X \mbox{ admits a (homotopy) section}.$$

We finally note that, if we have a commutative diagram where $f$ and $g$ are cofibrations
$$\xymatrix{ A \ar[r]^{\varphi}\ar[d]_{f} & B \ar[d]^{g}\\
X \ar[r]_{\psi}& Y
}$$
then we have a commutative diagram
$$\xymatrix{ G_n(f) \ar[r]^{G_n(\psi,\varphi)}\ar[d]_{g_n(f)} & G_n(g) \ar[d]^{g_n(g)}\\
X \ar[r]_{\psi}& Y.
}$$
Moreover, we have
\begin{enumerate}
\item[(a)] if $\varphi$ and $\psi$ are homotopy equivalences, then so is $G_n(\psi,\varphi)$;
\item[(b)] if the first diagram is a homotopy pull-back, then so is the second diagram.
\end{enumerate}

\noindent Note that $(b)$ follows (for instance) from the equivalence between $g_n(f)$ and the fibrewise join of $n+1$ copies of $f$ together with the Join Theorem (\cite{Doeraene}).

\subsection{Fibrewise $Q$ construction}\label{Qconstruction}
The notion of $Q$-category defined in \cite{SST} is based on the Dror-Frajoun fibrewise extension \cite{Farjoun} of a functor $Q^k$ equivalent to $\Omega^ k\Sigma^ k$. Instead of using the Dror-Farjoun construction, we will here use the (equivalent) explicit fibrewise extension of $Q^k$ given in \cite{Qcat}. We describe the case $k=1$, recall more quickly the general case and refer to \cite{SST},\cite[Sections 4.5, 4.6, 4.7]{CLOT} and \cite{Qcat} for further details.

We denote by $\Sigma^ 1 Z$ the unreduced suspension of a space $Z$, \textit{i.e.} $\Sigma^ 1 Z = Z\times I / \sim$ with $(z,0)\sim (z',0)$ and $(z,1)\sim
(z',1)$ for $z,z' \in Z$. Denoting by $[z,t]\in \Sigma^1Z$ the class of $(z,t)\in Z\times I$, we have a canonical map $\alpha^1=\alpha^1_Z:\{0,1\}=\partial I \to \Sigma^ 1 Z$ given by $\alpha^1(0)=[z,0]$ and $\alpha^1(1)=[z,1]$ where $z\in Z$. The functor $Q^ 1$ is defined by
$$Q^ 1(Z)=\{ \omega : I\to \Sigma^ 1 Z | \omega_{|\partial I}= \alpha^1_Z\}$$
and is given with a co-augmentation $\eta^1=\eta^1_Z:Z\to Q^ 1Z$ which takes $z\in Z$ to the path $z\to [z,t]$. If $Z$ is pointed and $\widetilde{\Sigma}Z$ denotes the reduced suspension of $Z$, then there is a natural map $Q^ 1(Z)\to \Omega \widetilde{\Sigma} Z$, which is a homotopy equivalence induced by the identification map $\Sigma^1Z\stackrel{\sim}{\to} \widetilde{\Sigma}Z$ and which makes compatible the coaugmention of $Q^ 1$ with the usual coaugmentation $\tilde{\eta}:Z\to \Omega \widetilde{\Sigma} Z$.\\

We now describe a fibrewise extension of $Q^1$. Let $p:E\to B$ be a fibration (over a path-connected space) with fibre $F$. The fibrewise suspension of $p$,
$\Sigma^1_BE\to B$, is defined by the push-out:
$$\xymatrix{
E\times\{0,1\} \ar@{ >->}[r] \ar[d]_{p\times id} & E \times I \ar[d] \ar@/^/[rdd]&\\
B\times\{0,1\} \ar@{ >->}[r] \ar@/_/[drr]& \Sigma^1_BE \ar[dr]^{\hat p}&\\
&&B
}$$
The resulting map $\hat{p}:\Sigma^1_BE\to B$ is a fibration whose fibre over $b$ is the (unreduced) suspension $\Sigma^1 F_b$ of the fibre of $p:E\to B$ over $b$. By construction, we have a canonical map:
$$\mu^1:B\times\{0,1\}\to \Sigma^1_BE$$
which is a fibrewise extension of $\alpha^1$: for any $b\in B$, $\mu^1(b,-)=\alpha^1_{F_b}:\{0,1\}\to \Sigma^1 F_b$. We define
$$Q^1_B (E) =\{ \omega : I\to \Sigma^1_B E ~|~ \exists b\in B , \hat p \omega = b \mbox{  and  } \omega_{|\partial I}= \mu^1 (b,-)\}$$
together with the map  $q^1(p):Q^1_B (E)\to B$  given by $\omega\mapsto \hat p \omega(0)$. This is a fibration whose fibre over $b$ is 
$Q^1 (F_b)$ (\cite[Lemma 8]{Qcat}). We also have a fibrewise coaugmentation $\eta^1_B: E\to Q^1_B(E)$ which extends $\eta^1: F\to Q^1(F)$ and we have a commutative diagram

$$\xymatrix{
E \ar[d]^p \ar[r]^{\eta^1_B} & Q^1_B(E) \ar[d]^{q^1(p)}\ar[r] & Q^1(E)\ar[d]^{Q^1(p)}\ar[r]^{\sim} & \Omega\widetilde{\Sigma} E\ar[d]^{\Omega\widetilde{\Sigma} p} \\
B \ar@{=}[r] & B \ar[r]_{\eta^1}& Q^1(B)\ar[r]^{\sim} & \Omega\widetilde{\Sigma} B
}$$
where the map $Q^1_B(E)\to Q^1(E)$ is induced by the identification map $\Sigma^1_BE\to \Sigma^1E$.\\

\noindent In general, for any $k\geq 1$, we consider
\begin{itemize}
\item the $k$ fold unreduced suspension of a space $Z$, $\Sigma^kZ$, which can be described as $(Z\times I^k)/\sim$ where the relation is given by
$$(z,t_1,\ldots,t_k)\sim (z',t'_1,\ldots,t'_k) $$
if, for some $i$, $t_i=t'_i\in \{0,1\}$ and $t_j=t'_j$ for all $j>i$.
We write $[z,t_1,\ldots,t_k]$ for the class of an element. 
\item the $k$th fibrewise suspension of $p: E\to B$, $\hat p^k:\Sigma_B^kE \to B$,
whose fibre over $b$ is $\Sigma^kF_b$. As before, $\Sigma_B^kE$ can be described as $(E\times I^k)/\sim$ where the relation is given by
$(e,t_1,\ldots,t_k)\sim (e',t'_1,\ldots,t'_k) $
if $p(e)=p(e')$ and, for some $i$, $t_i=t'_i\in \{0,1\}$ and $t_j=t'_j$ for all $j>i$. We write $[e,t_1,\ldots,t_k]$ for the class of an element.
\item the canonical map $\alpha^k:\partial I^k \to \Sigma^k Z$ given by $\alpha^k(t_1,\ldots,t_k)=[z,t_1,\ldots,t_k]$ (where $z\in Z$ is any element) and its fibrewise extension $\mu^k:B\times \partial I^k \to \Sigma_B^k E$ 
satisfying $\mu^ k(b,-)=\alpha^k:\partial I^k \to \Sigma^k F_b$ for any $b\in B$.
\item the fibration $q^k(p):Q_B^k(E) \to B$ where $Q_B^kE$ is the (closed) subspace of $(\Sigma_B^k E)^{I^k}$ given by
$$\qquad Q^k_B (E) =\{ \omega : I^k\to \Sigma^k_B E ~|~ \exists b\in B , \hat p^k \omega = b \mbox{  and  } \omega_{|\partial I^k}= \mu^k (b,-)\}$$
and  $q^k(p)(\omega)=\hat{p}^k\omega(0)$. The fibre is
$$Q^k(F)=\{\omega : I^k\to \Sigma^k F | \omega_{|\partial I^k}=\alpha^k\}\stackrel{\sim}{\to}\Omega^k\widetilde{\Sigma}^k F$$
and is given with an obvious coaugmentation $\eta^k:F\to Q^k(F)$, equivalent to the classical augmentation $\tilde{\eta}^k:F\to  \Omega^k\widetilde{\Sigma}^k F$. We denote by $\eta_B^k:E\to Q_B^k(E)$ the fibrewise extension of $\eta^k$. When it is relevant $Q^k(F)$ and $Q_B^kE$ are considered with the base point given by the map $u\mapsto [*,u]$ where $u\in I^k$ and $\ast$ is the base point of $F$ and $E$.
\end{itemize}
We have, for any $k\geq 1$, a commutative diagram:
\begin{equation}\label{DiagQk}
\xymatrix{
E \ar[d]^p \ar[r]^{\eta^k_B} & Q^k_B(E) \ar[d]^{q^k(p)}\ar[r] & Q^k(E)\ar[d]^{Q^k(p)}\ar[r]^{\sim} & \Omega^k\widetilde{\Sigma}^k E\ar[d]^{\Omega^k\widetilde{\Sigma}^k p} \\ 
B \ar@{=}[r] & B \ar[r]^{\eta^k}& Q^k(B)\ar[r]^{\sim} & \Omega^k\widetilde{\Sigma}^k B.
}
\end{equation}
Setting $Q^0=Q^0_B=id$ we also have a commutative diagram of the following form:
\begin{equation}\label{DiagSequenceQk}\xymatrix{
E=Q^0_BE \ar[d]^p \ar[r] & Q^1_B(E) \ar[d]^{q^1 (p)} \ar[r]& \cdots \ar[r]&Q^k_B(E) \ar[d]^{q^k(p)} \ar[r]^{b^k_B} & Q^{k+1}_B(E) \ar[d]^{q^{k+1} (p)} \ar[r]& \cdots\\
B \ar@{=}[r] & B \ar@{=}[r]& \cdots\ar@{=}[r] &B \ar@{=}[r] & B \ar@{=}[r]& \cdots
}
\end{equation}
where the map $b^k_B$ is a fibrewise extension of the map $b^k:Q^k(F)\to Q^{k+1}(F)$ given by $b^k(\omega)(t_1,\ldots,t_{k+1})=[\omega(t_1,\ldots,t_k),t_{k+1}]$. Observe that the map $b^k$ is equivalent to the map $\Omega^k(\tilde{\eta}_{\widetilde{\Sigma}^kF}):\Omega^k(\widetilde{\Sigma}^kF) \to \Omega^k(\Omega \widetilde{\Sigma} \widetilde{\Sigma}^kF)=\Omega^{k+1}\widetilde{\Sigma}^{k+1}F$. We can interpret the sequence (\ref{DiagSequenceQk}) as a fibrewise stabilization of the fibration $p$ since, for any integers $k$ and $i$, we have $\pi_i(Q^kF)=\pi_{i+k}(\widetilde{\Sigma}^kF)$.\\

\noindent For any $k\geq 0$, the functors $Q^k$ and $Q^k_B$ preserve homotopy equivalences. We also note that the $Q^k_{-}$ construction is natural in the sense that, if we have a commutative diagram
\[\begin{split}\xymatrix{%
E' \ar[r]^f\ar[d]_{p'} & E \ar[d]^{p}\\
B' \ar[r]_{g}& B}\end{split}
\label{diag1}\tag{\dag}\]
where $p$ and $p'$ are fibrations (over path-connected spaces), then, for any $k$, we obtain commutative diagram
\[\begin{split}\xymatrix{%
Q^k_{B'}(E') \ar[r]\ar[d]_{q^k(p')} & Q^k_B(E) \ar[d]^{q^k(p)}\\
B' \ar[r]_{g} & B
}\end{split}
\label{diag2}\tag{\ddag}\]
Moreover we have
\begin{prop}\label{funct+pullback}
With the notations above, if Diagram (\ref{diag1}) is a homotopy pull-back, then so is Diagram (\ref{diag2}).
\end{prop}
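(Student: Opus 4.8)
The plan is to reduce the statement to a fibrewise assertion that can be checked fibrewise, using the fact that a square of fibrations over a common base (after pulling back) is a homotopy pull-back precisely when the induced map on fibres is a homotopy equivalence. First I would factor Diagram (\ref{diag2}) through the pull-back of $q^k(p)$ along $g$: write $P = B' \times_B Q^k_B(E)$ with its projection $\pi : P \to B'$, which is again a fibration, and observe that the natural map $Q^k_{B'}(E') \to Q^k_B(E)$ together with $q^k(p')$ induces a map $\theta : Q^k_{B'}(E') \to P$ over $B'$. Since $q^k(p')$ and $\pi$ are both fibrations over $B'$, Diagram (\ref{diag2}) is a homotopy pull-back if and only if $\theta$ is a (weak, hence genuine, since we are in spaces of CW type) homotopy equivalence, and for that it suffices to show $\theta$ restricts to a homotopy equivalence on each fibre over $b' \in B'$.

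Next I would identify the two fibres. The fibre of $q^k(p')$ over $b'$ is $Q^k(F'_{b'})$, where $F'_{b'}$ is the fibre of $p'$ over $b'$; this is part of the construction recalled in Section \ref{Qconstruction} (the fibre of $q^k(p)$ over $b$ is $Q^k(F_b)$). The fibre of $\pi$ over $b'$ is the fibre of $q^k(p)$ over $g(b')$, namely $Q^k(F_{g(b')})$, where $F_{g(b')}$ is the fibre of $p$ over $g(b')$. Under these identifications, the map $\theta$ on fibres is exactly $Q^k$ applied to the map of fibres $\bar f : F'_{b'} \to F_{g(b')}$ induced by $f$. Now the hypothesis that Diagram (\ref{diag1}) is a homotopy pull-back of fibrations means precisely that this induced map $\bar f$ on fibres is a homotopy equivalence. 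Since, as recalled at the end of Section \ref{Qconstruction}, the functor $Q^k$ preserves homotopy equivalences, $Q^k(\bar f)$ is a homotopy equivalence, so $\theta$ is a fibrewise homotopy equivalence and hence a homotopy equivalence, which gives the claim.

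The step I expect to require the most care is the bookkeeping that identifies the fibrewise map $\theta$ with $Q^k$ of the map of fibres $\bar f$ — that is, checking that the homeomorphism between the fibre of $q^k(p')$ and $Q^k(F'_{b'})$ (and similarly on the target side) is compatible with all the structure maps ($\hat p^{\,k}$, the canonical $\mu^k$, the path-space conditions defining $Q^k_B(E)$) so that the comparison map genuinely becomes the non-fibrewise functor $Q^k$ on fibres. This is essentially the content of \cite[Lemma 8]{Qcat} together with the naturality of the $\Sigma^k_B(-)$ push-out construction in the square (\ref{diag1}); once one grants that $\Sigma^k_{B'}(E')$ is, fibrewise over $b'$, the ordinary suspension $\Sigma^k(F'_{b'})$ compatibly with $f$, the rest follows by unwinding definitions. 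A minor technical point worth addressing is that one may need to replace $g$ by a fibration (or simply invoke that all spaces have CW homotopy type) so that ``homotopy pull-back'' can be tested by the strict pull-back $P$ and fibrewise weak equivalences upgrade to homotopy equivalences; alternatively one can cite the Join Theorem as in remark $(b)$ of Section \ref{Ganeafibrations}, but the direct fibrewise argument seems cleanest here.
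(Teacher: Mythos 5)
Your argument is correct in outline, but it is not the route the paper's proof takes: it is essentially the ``more conceptual proof'' that the authors record in the Remark immediately \emph{after} Proposition \ref{funct+pullback}. The paper's actual proof is purely point-set: since $Q^k$ and $Q^k_B$ preserve homotopy equivalences, one reduces to the case where Diagram (\ref{diag1}) is a \emph{strict} pull-back, and then one writes down an explicit inverse of the whisker map first for the fibrewise suspensions, $\Sigma^k_{B}E\times_{B}B'\to \Sigma^k_{B'}E'$, and then for $Q^k_{B}(E)\times_{B}B'\to Q^k_{B'}(E')$; thus Diagram (\ref{diag2}) is a strict pull-back of a fibration and hence a homotopy pull-back, with no input from fibre-homotopy-equivalence recognition theorems and no hypotheses on the fibres. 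Your fibrewise argument, by contrast, rests on two nontrivial external ingredients that you use implicitly: (i) Dold's theorem (or a five-lemma argument on the fibration sequences) to pass from ``$\theta$ is a homotopy equivalence on each fibre'' to ``$\theta$ is a homotopy equivalence,'' and (ii) Stasheff-type results guaranteeing that the relevant total spaces and fibres (e.g.\ $B'\times_B Q^k_B(E)$, $Q^k(F_{g(b')})$) have CW homotopy type, so that weak equivalences upgrade to genuine ones. This is why the authors, in their version of this argument, add the hypothesis that the fibres $F$ and $F'$ are path-connected; note that the Proposition is applied to Ganea fibrations $g_0(f)$ whose fibre is the homotopy fibre of $f$ (e.g.\ $\Omega X$ for $f=\ast\to X$), which need not be path-connected, so the unconditional explicit proof is genuinely doing work here. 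Your approach can be pushed through in that generality (Dold's theorem does not require connected fibres), but you would need to make points (i) and (ii) explicit; the identification of $\theta$ on fibres with $Q^k(\bar f)$, which you rightly single out as the delicate bookkeeping step, is exactly what the paper's explicit formulas for the inverse whisker maps verify.
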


\begin{proof} Since the functors $Q^k$ and $Q^k_B$ preserve homotopy equivalences, it is sufficient to establish the statement when Diagram (\ref{diag1}) is a strict pull-back. In this case, the whisker map $\psi: E'\to E\times_{B}B'$ is a homeomorphism and its inverse $\phi$ satisfies $p'\phi(e,b')=b'$ for $(e,b')\in E\times_{B}B'$. We can then check that the map $\bar{\phi}:\Sigma^k_{B}E\times_{B}B'\to \Sigma^k_{B'}E'$ induced by $((e,u),b')\mapsto  [\phi(e,b'),u]$ for $((e,u),b')\in (E\times I^k)\times_BB'$ 
is an inverse of the whisker map $\Sigma^k_{B'}E'\to \Sigma^k_{B}E\times_{B}B'$. In other words, the diagram 
$$\xymatrix{ \Sigma^k_{B'}E' \ar[r]\ar@{->>}[d]_{\widehat{p'}^k} & \Sigma^k_BE \ar@{->>}[d]^{\hat{p}^ k}\\
B' \ar[r]_{g} & B
}$$
is a strict pull-back  (and homotopy pull-back since $\hat{p}^k$ is a fibration). We can next check that the map $Q^k_{B}(E)\times_{B}B'\to Q^k_{B'}(E')$ that takes $(\omega,b')\in Q^k_{B}(E)\times_{B}B'$ to the element $I^k\to \Sigma^k_{B'}E'$ of $Q^k_{B'}(E')$ given by $u\mapsto \bar{\phi}(\omega(u),b')$ is an inverse of the whisker map $Q^k_{B'}(E')\to Q^k_{B}(E)\times_{B}B'$. This means that Diagram (\ref{diag2}) is a strict pull-back and therefore a homotopy pull-back since $q^k(p)$ is a fibration. 
\end{proof}

\begin{rem} Assuming that the fibres $F$ and $F'$ of $p$ and $p'$ are path-connected spaces (having the homotopy type of a CW-complex), we can give the following more conceptual proof of Proposition \ref{funct+pullback}. In this case, the spaces $Q^k_B(E)$ and $Q^k_{B'}(E')$ are path-connected spaces having the homotopy type of a CW-complex, since $B$ and $Q^k (F)\simeq\Omega^k\widetilde{\Sigma}^k F$ and $Q^k (F')\simeq \Omega^k\widetilde{\Sigma}^k F$ so are (see \cite{Stasheff}). Since Diagram (\ref{diag1}) is a homotopy pull-back, the map $e:F'\to F$ induced by this diagram is a homotopy equivalence and so is $Q^k(e)$. Therefore Diagram (\ref{diag2})
is a homotopy pull-back since the whisker map from $Q^k_{B'}(E')$ to $Q^k_{B}(E)\times_BB'$ induces $Q^k(e)$ between the fibres and is hence a homotopy equivalence.
\end{rem}

Finally, we also note the following constructions which will be, as in \cite{SST}, useful in our study of products.
If $p:E\to B$ and $p':E'\to B'$ are two fibrations then, there exist, for any $k\geq 0$, commutative diagrams of the following form
 $$\xymatrix{
Q_B^k(E) \times E' \ar[r]\ar[rd]_{q^ k(p)\times p'}& Q_{B\times B'}^{k}(E\times E')\ar[d]^{q^k(p\times p')} &E\times Q_{B'}^k(E')  \ar[r]\ar[rd]_{p\times q^ k(p')}& Q_{B\times B'}^{k}(E\times E')\ar[d]^{q^k(p\times p')}\\
&B\times B' &&B\times B' 
}$$
which are induced by the obvious fibrewise extensions of the maps
$$\begin{array}{rcl}
\Sigma^kZ \times Z' & \to & \Sigma^{k}(Z\times Z')\\
([z,t_1,\ldots,t_k],z') & \mapsto & [(z,z'),t_1,\ldots,t_{k}]\\
\end{array}
$$
and 
$$\begin{array}{rcl}
Z \times \Sigma^kZ' & \to & \Sigma^{k}(Z\times Z')\\
(z,[z',t_1,\ldots,t_k]) & \mapsto & [(z,z'),t_1,\ldots,t_{k}]\\
\end{array}
$$
By considering the fibrewise extensions of the evaluation
$$\begin{array}{rcl}
ev:\Sigma^kQ^k Z & \to & \Sigma^{k}Z\\
\left[\omega,t_1,\ldots,t_k\right] & \mapsto & \omega(t_1,\ldots,t_k)\\
\end{array}
$$
and of the map 
$$\begin{array}{rcl}
Q^k\circ Q^k (Z) & \to & Q^k (Z)\\
\omega:I^k\to \Sigma^kQ^kZ & \mapsto & ev\circ \omega\\
\end{array}
$$
we also get a commutative diagram
$$\xymatrix{
Q^k_B\circ Q^k_B(E) \ar[r]\ar[rd] & Q^k_B(E)\ar[d]\\
&B.
}$$

\noindent This permits us to establish the following result:
\begin{prop}\label{mapproduct} Let $p:E\to B$ and $p':E'\to B'$ be two fibrations. For any $k\geq 0$, there exists a commutative diagram
$$\xymatrix{
Q_B^k(E) \times Q_{B'}^k(E') \ar[rr]\ar[rd]_{q^ k(p)\times q^ k (p')}&  & Q_{B\times B'}^{k}(E\times E')\ar[dl]^{q^k(p\times p')}\\
&B\times B'.
}$$
\end{prop}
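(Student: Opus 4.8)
The plan is to obtain the map $Q_B^k(E)\times Q_{B'}^k(E')\to Q_{B\times B'}^{k}(E\times E')$ as a composite of three natural maps over $B\times B'$, exactly in the spirit of the argument used for the (non-fibrewise) functor $Q^k$ in \cite{SST}. The three ingredients are already at hand: the two fibrewise ``product-into-suspension'' maps $Q_B^k(E)\times E'\to Q_{B\times B'}^{k}(E\times E')$ and $E\times Q_{B'}^k(E')\to Q_{B\times B'}^{k}(E\times E')$ constructed just above, the fibrewise multiplication $Q_B^k\circ Q_B^k(E)\to Q_B^k(E)$ coming from the evaluation map, and the naturality of the $Q_{-}^k$ construction. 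I will use the latter in the following form: if $h$ is a map of fibrations over a fixed base $C$, then $Q_C^k(h)$ is again a map of fibrations over $C$ --- this is Diagram (\ref{diag2}) specialized to $g=\mathrm{id}_C$.

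Writing $P=B\times B'$, I would first apply the map $Q_B^k(E)\times E'\to Q_{B\times B'}^{k}(E\times E')$ to the two fibrations $p\colon E\to B$ and $q^k(p')\colon Q_{B'}^k(E')\to B'$; this produces a map over $P$
$$\alpha\colon Q_B^k(E)\times Q_{B'}^k(E')\longrightarrow Q_P^k\bigl(E\times Q_{B'}^k(E')\bigr).$$
The second product-into-suspension map, applied to $p$ and $p'$, is a map $\beta\colon E\times Q_{B'}^k(E')\to Q_P^k(E\times E')$ over $P$; applying $Q_P^k$ gives a map over $P$
$$Q_P^k(\beta)\colon Q_P^k\bigl(E\times Q_{B'}^k(E')\bigr)\longrightarrow Q_P^k\bigl(Q_P^k(E\times E')\bigr).$$
Finally I would compose with the fibrewise multiplication $m\colon Q_P^k\bigl(Q_P^k(E\times E')\bigr)\to Q_P^k(E\times E')$, again a map over $P$. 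The composite $m\circ Q_P^k(\beta)\circ\alpha$ is then the desired map, and, being a composite of maps over $P=B\times B'$, it sits over $B\times B'$; since $q^k(p)\times q^k(p')$ and $q^k(p\times p')$ are the structure projections onto $B\times B'$, this is precisely the asserted commutative triangle. (One could equally well run the symmetric construction, first mapping into $Q_P^k\bigl(Q_B^k(E)\times E'\bigr)$ and then applying $Q_P^k$ to $Q_B^k(E)\times E'\to Q_P^k(E\times E')$.)

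I do not expect a genuine obstacle here; the only real work is bookkeeping --- keeping straight over which fibrewise base each instance of $Q^k$ is formed, and checking that the three ingredient maps are indeed maps of fibrations over the indicated bases, which is immediate from their explicit definitions on total spaces. As a sanity check, restricting to the fibres over a point of $P$, the composite recovers the classical comparison map $Q^kF\times Q^kF'\to Q^k(F\times F')$ used in \cite{SST}, compatibly with the equivalences $Q^k(-)\simeq\Omega^k\widetilde{\Sigma}^k(-)$.
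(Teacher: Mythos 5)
Your proof is correct and is essentially identical to the paper's: the paper's displayed composite is exactly your $m\circ Q_P^k(\beta)\circ\alpha$, namely the first product-into-suspension map applied to $p$ and $q^k(p')$, followed by $Q_{B\times B'}^k$ of the second product-into-suspension map, followed by the fibrewise multiplication $Q^k_{B\times B'}\circ Q^k_{B\times B'}(E\times E')\to Q^k_{B\times B'}(E\times E')$, all over $B\times B'$.
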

\begin{proof} Using the constructions above we obtain:

$$\resizebox{1\textwidth}{!}{\xymatrix{
Q_B^k(E) \times Q_{B'}^k(E') \ar[r]\ar[d]_{q^ k(p)\times q^ k(p')}& Q_{B\times B'}^{k}(E\times Q_{B'}^k(E'))\ar[d]\ar[r]& Q_{B\times B'}^{k}\circ Q_{B\times B'}^{k}(E\times E')\ar[d]\ar[r]& Q_{B\times B'}^{k}(E\times E')\ar[d]^{q^k(p\times p')}\\
B\times B'\ar@{=}[r]&B\times B'\ar@{=}[r]&B\times B'\ar@{=}[r]&B\times B'.
}}$$

\end{proof}

\subsection{Definition of $Q\secat$ and $Q\TC$}\label{DefQsecat}

Let $f:A\to X$ a cofibration where $X$ is a well-pointed path-connected space and let $k\geq 0$. By applying the $Q^k_X$ construction to the Ganea fibrations $g_n(f)$ we define

\begin{defin} $Q^k\secat(f)$ is the least integer $n$ (or $\infty$) such that the fibration $$q^k (g_n(f)):Q_X^k( G_n(f))\to X$$ admits a (homotopy) section.
\end{defin}

\noindent 
By Diagram (\ref{DiagSequenceQk}), we have: 
$$\cdots\leq  Q^ {k+1}\secat(f)\leq Q^k\secat(f) \leq \cdots \leq Q^1\secat(f)\leq Q^0\secat(f)=\secat(f)$$

\noindent As a limit invariant, we set: $Q\secat(f):=\lim Q^k\secat(f)$.\\

\noindent If $f$ is the inclusion $\ast\to X$, we recover the notion of $Q^k\cat(X)$ introduced by H. Scheerer, D. Stanley and D. Tanr\'e. If $X$ is a CW-complex and $f$ is the diagonal map $\Delta:X\to X\times X$, we naturally use the notation $Q^k\TC(X)$ and $Q\TC(X)$.

\begin{rem} The notion of $Q^k\secat$ can be extended to any map $g$ by applying the $Q$ construction to any fibration equivalent to the join map characterizing $\secat(g)$ or, equivalently, by setting $Q^k\secat(g):=Q^k\secat(f)$ where $f$ is any cofibration weakly equivalent to $g$. 
\end{rem}

\section{Some properties of Qsecat and QTC}\label{properties}

In all the statements in this section, we consider cofibrations whose target is a well-pointed path-connected space.

\subsection{Basic properties}

\hspace{2mm}

\noindent We start with the following properties which permit us to generalize to $Q\TC$ and $Q\cat$ the well-known relationships \cite{Farber} between $\TC$ and $\cat$ (see Corollary \ref{firstpropQTC}):

\begin{prop} Let $f:A\to X$ and $g:B\to Y$ be two cofibrations together with a commutative diagram
$$\xymatrix{A \ar[r]_{\varphi} \ar[d]_f & B\ar[d]^g\\
X\ar[r]_{\psi} &Y.
}$$
\begin{enumerate}
\item[(a)] If $\psi$ is a homotopy equivalence then, for any $k\geq 0$, $Q^k\secat(f)\geq Q^k\secat(g)$.
\item[(b)] If the diagram is a homotopy pull-back then, for any $k\geq 0$, we have $Q^k\secat(f)\leq Q^k\secat(g)$.
\end{enumerate}
\end{prop}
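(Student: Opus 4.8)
The plan is to apply the fibrewise $Q^k$ construction to the commutative square of Ganea fibrations induced by $(\varphi,\psi)$ and then transport homotopy sections in the appropriate direction in each part. Both parts begin from the same observation. Since $f$ and $g$ are cofibrations, the given square produces the commutative square of Ganea fibrations
$$\xymatrix{ G_n(f) \ar[r]^{G_n(\psi,\varphi)}\ar[d]_{g_n(f)} & G_n(g) \ar[d]^{g_n(g)}\\ X \ar[r]_{\psi}& Y, }$$
and, applying the naturality of the $Q^k_{-}$ construction to this square of fibrations, the commutative square
$$\xymatrix{ Q^k_X(G_n(f)) \ar[r]^{\Phi}\ar[d]_{q_f} & Q^k_Y(G_n(g)) \ar[d]^{q_g}\\ X \ar[r]_{\psi}& Y, }$$
where I abbreviate $q_f:=q^k(g_n(f))$, $q_g:=q^k(g_n(g))$, and $\Phi$ is the induced top map, so that $q_g\circ\Phi=\psi\circ q_f$. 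Since all the maps $q^k(g_n(-))$ are fibrations, throughout I work with homotopy sections, these being equivalent to strict sections.

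For part (a), the inequality $Q^k\secat(f)\geq Q^k\secat(g)$ is equivalent to the implication: if $q_f$ admits a homotopy section, then so does $q_g$. So I would assume $Q^k\secat(f)\leq n$, i.e.\ there is $s:X\to Q^k_X(G_n(f))$ with $q_f\circ s\simeq\mathrm{id}_X$. Choosing a homotopy inverse $\bar\psi:Y\to X$ of $\psi$, I set $\sigma:=\Phi\circ s\circ\bar\psi$ and compute $q_g\circ\sigma=\psi\circ q_f\circ s\circ\bar\psi\simeq\psi\circ\bar\psi\simeq\mathrm{id}_Y$. Thus $\sigma$ is a homotopy section of $q_g$, giving $Q^k\secat(g)\leq n$, hence $Q^k\secat(g)\leq Q^k\secat(f)$. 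Note that this argument uses only that $\psi$ is a homotopy equivalence, not any hypothesis on $\varphi$.

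For part (b), the inequality $Q^k\secat(f)\leq Q^k\secat(g)$ is equivalent to the reverse implication: if $q_g$ admits a homotopy section, then so does $q_f$. The hypothesis that the $(\varphi,\psi)$ square is a homotopy pull-back gives, by property (b) of the Ganea fibrations recalled in Section \ref{Ganeafibrations}, that the Ganea square above is a homotopy pull-back; applying Proposition \ref{funct+pullback} to it then shows that the $Q^k$ square above is itself a homotopy pull-back. Assuming $Q^k\secat(g)\leq n$, i.e.\ there is $\sigma:Y\to Q^k_Y(G_n(g))$ with $q_g\circ\sigma\simeq\mathrm{id}_Y$, the composite $\sigma\circ\psi$ satisfies $q_g\circ(\sigma\circ\psi)\simeq\psi=\psi\circ\mathrm{id}_X$. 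The universal property of the homotopy pull-back, applied to the pair $(\mathrm{id}_X,\sigma\circ\psi)$ together with this homotopy, yields a map $s:X\to Q^k_X(G_n(f))$ with $q_f\circ s\simeq\mathrm{id}_X$, so $Q^k\secat(f)\leq n$.

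The routine bookkeeping is the passage between homotopy and strict sections, harmless because the relevant maps are fibrations. I expect the main obstacle to be the correct handling of part (b): one must ensure that the section of $q_g$ genuinely descends to a section of $q_f$ through a \emph{homotopy} pull-back, i.e.\ that the induced $s$ only satisfies $q_f\circ s\simeq\mathrm{id}_X$ rather than equality, and the legitimacy of this step rests entirely on chaining property (b) of the Ganea fibrations with Proposition \ref{funct+pullback} to upgrade the hypothesis on $(\varphi,\psi)$ to a homotopy pull-back at the $Q^k$ level. This upgrade, rather than any explicit construction, is the crux of the argument.
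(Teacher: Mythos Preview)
Your proof is correct and follows essentially the same route as the paper's: build the commutative square of $Q^k$-Ganea fibrations from naturality, then for (a) push a section across via a homotopy inverse of $\psi$, and for (b) chain property~(b) of the Ganea fibrations with Proposition~\ref{funct+pullback} to see that the $Q^k$-square is a homotopy pull-back and pull the section back. The paper's proof is terser and only cites Proposition~\ref{funct+pullback} for part~(b), leaving the intermediate step through property~(b) of the Ganea fibrations implicit; you have spelled this out explicitly, which is a genuine improvement in clarity.
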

\begin{proof}
By considering the Ganea fibrations and applying the $Q^k_{X}$ construction, we obtain the following commutative diagram
$$\xymatrix{Q^k_X(G_n(f)) \ar[r]\ar[d]_{q^k(g_n(f))} & Q^k_Y(G_n(g))\ar[d]^{q^k(g_n(g))}\\
X\ar[r]_{\psi} &Y.
}$$
If $\psi$ is a homotopy equivalence, we deduce from a section of the left-hand fibration a homotopy section of the right-hand fibration, which proves the statement (a). For (b), the hypothesis implies that the diagram is a homotopy pull-back (see Proposition \ref{funct+pullback}) which permits us to obtain a section of the left hand fibration from a section of the right hand one.
\end{proof}

\begin{cor}\label{firstpropQTC} Let $X$ be a path-connected CW-complex. For any $k\geq 0$, we have $Q^k\cat(X)\leq Q^k\TC(X)\leq Q^k\cat(X\times X)$.
\end{cor}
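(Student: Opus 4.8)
The plan is to derive both inequalities from the Proposition just proved, by realizing Farber's classical comparison maps as the rows of two suitable squares. Recall that $Q^k\cat(X)=Q^k\secat(\ast\to X)$, that $Q^k\TC(X)=Q^k\secat(\Delta\colon X\to X\times X)$, and that $Q^k\cat(X\times X)=Q^k\secat(\ast\to X\times X)$. Since $X$ is a path-connected CW-complex, $X\times X$ is a well-pointed path-connected CW-complex, so the basepoint inclusions $\ast\to X$, $\ast\to X\times X$ and the diagonal $\Delta$ are all cofibrations; thus the Proposition applies to each of the squares below.

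For the first inequality $Q^k\cat(X)\leq Q^k\TC(X)$, I would apply part (b) to the square
$$\xymatrix{ \ast \ar[r]\ar[d] & X \ar[d]^{\Delta}\\ X \ar[r]_{x\mapsto(\ast,x)} & X\times X }$$
whose left-hand vertical map is $f=(\ast\to X)$ and right-hand vertical map is $g=\Delta$, the top map being the basepoint $\ast\to X$. The square commutes because both composites send the point to $(\ast,\ast)$. Part (b) then yields $Q^k\secat(\ast\to X)\leq Q^k\secat(\Delta)$, i.e. $Q^k\cat(X)\leq Q^k\TC(X)$, \emph{provided the square is a homotopy pull-back}. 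This verification is the only genuine step and is the main obstacle. I would carry it out by computing the homotopy fibre product $P$ of $\Delta$ and $\psi\colon x\mapsto(\ast,x)$: a point of $P$ is a pair $b,x\in X$ together with a path in $X\times X$ from $(b,b)$ to $(\ast,x)$, that is, a path from $b$ to $\ast$ together with a path from $b$ to $x$. Sliding $x$ back to $b$ along the second path deformation retracts $P$ onto the space of paths in $X$ ending at $\ast$, which is contractible; hence the canonical map $\ast\to P$ is a homotopy equivalence and the square is a homotopy pull-back. (This is precisely the $Q$-analogue of the classical fact that $\Delta$ pulls back along $x\mapsto(\ast,x)$ to the inclusion $\ast\to X$.)

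For the second inequality $Q^k\TC(X)\leq Q^k\cat(X\times X)$, I would instead apply part (a) to the square
$$\xymatrix{ \ast \ar[r]\ar[d] & X \ar[d]^{\Delta}\\ X\times X \ar[r]_{\mathrm{id}} & X\times X }$$
in which the left-hand vertical map is now the basepoint inclusion $f=(\ast\to X\times X)$, the top map is $\ast\to X$, the right-hand map is $g=\Delta$, and the bottom map is $\psi=\mathrm{id}_{X\times X}$. Again both composites send the point to $(\ast,\ast)$, so the square commutes, and $\psi$ is (trivially) a homotopy equivalence. Part (a) therefore gives $Q^k\secat(f)\geq Q^k\secat(g)$, that is $Q^k\cat(X\times X)\geq Q^k\TC(X)$. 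It is essential here to take $f$ to be the basepoint inclusion $\ast\to X\times X$ (which computes $Q^k\cat(X\times X)$) rather than $\mathrm{id}_{X\times X}$; with this choice the inequality comes out with the correct sign. Combining the two displayed inequalities yields the statement, the homotopy pull-back check of the first step being the only nontrivial ingredient.
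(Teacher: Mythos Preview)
Your proof is correct and follows essentially the same approach as the paper: the two squares you use are exactly the paper's two squares (the paper writes $i_2$ for your map $x\mapsto(\ast,x)$), with part~(a) applied to the identity-on-the-bottom square and part~(b) to the inclusion-on-the-bottom square. You additionally spell out the verification that the second square is a homotopy pull-back, which the paper simply asserts.
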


\begin{proof} It suffices to apply the two items of the propostion above to the following two diagrams, respectively. The right hand diagram, where $i_2$ is the inclusion on the second factor, is a homotopy pull-back:

$\qquad \qquad \xymatrix{
 \ast \ar[r]\ar[d] &X\ar[d]^{\Delta} &\quad &\ast\ar[r]\ar[d] & X\ar[d]^{\Delta} \\
 X\times X\ar@{=}[r]& X\times X  &\quad \qquad & X \ar[r]_{i_2}    & X\times X.           
}$

\end{proof}

\subsection{Cohomological lower bound}

Let $f: A\to X$ be a cofibration, and let $f^*\colon H^*(X;{\Bbbk})\to H^ *(A;{\Bbbk})$ be the morphism induced by $f$ in cohomology with coefficients in a field ${\Bbbk}$. As is well-known (\cite{Schwarz}), if we consider the index of nilpotency, ${\rm nil}$, of the ideal $\ker f^*$, that is, the least integer $n$ such that any $(n+1)$-fold cup product in $\ker f^*$ is trivial, then we have
$${\rm nil}\ker f^*\leq \secat(f).$$
Actually the proof of \cite[Thm 5.2]{MTC} permits us to see that 
$${\rm nil}\ker f^*\leq H\secat(f) \leq \secat(f)$$
where $H\secat(f)$ is the least integer $n$ such that the morphism induced in cohomology by the $n$th Ganea fibration of $f$, $H^*(X;{\Bbbk})\to H^*(G_n(f);{\Bbbk})$, is injective.\\

\noindent Here we prove that
\begin{thm}\label{QsecatHsecat} For any $k\geq 0$, ${\rm nil}\ker f^*\leq H\secat(f)\leq Q^k\secat(f)$.
\end{thm}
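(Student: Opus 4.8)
The plan is to prove the two inequalities separately, the second being the substantive one. For the first, $\nil\ker f^*\leq H\secat(f)$, I would argue as follows: if $H\secat(f)=n$, then $f$ factors (up to homotopy, after passing to cohomology) through $g_n(f):G_n(f)\to X$ since the Ganea fibration $g_n(f)$ induces an injection in cohomology. Concretely, $f:A\to X$ admits a canonical lift $A\to G_n(f)$ (since the fat-wedge $T^n(f)$ contains $f(A)$ in each coordinate, the constant-to-$f(A)$ lift exists), so $f^*$ factors as $H^*(X)\to H^*(G_n(f))\to H^*(A)$; hence $\ker f^*\supseteq \ker(g_n(f)^*)$, and since $g_n(f)^*$ is injective when restricted appropriately... more precisely one uses that the fibre inclusion $F_n(f)\hookrightarrow G_n(f)$ kills $(n+1)$-fold products coming from $X$ (the standard Ganea/fat-wedge argument: $T^n(f)\hookrightarrow X^{n+1}$ pulled back along the diagonal kills $(n+1)$-fold products in $\ker f^*$), which is exactly the cited argument from \cite[Thm 5.2]{MTC}. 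Since this half is already essentially in the literature as quoted in the excerpt, I would keep it brief and cite it.

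The core of the theorem is $H\secat(f)\leq Q^k\secat(f)$. The plan is: suppose $Q^k\secat(f)=n$, so $q^k(g_n(f)):Q^k_X(G_n(f))\to X$ admits a (homotopy) section $\sigma$. I want to show $H^*(X;\Bbbk)\to H^*(G_n(f);\Bbbk)$ is injective. The key is the commutative triangle coming from the fibrewise coaugmentation: the diagram
$$\xymatrix{
G_n(f) \ar[rr]^{\eta^k_X} \ar[dr]_{g_n(f)} & & Q^k_X(G_n(f)) \ar[dl]^{q^k(g_n(f))}\\
& X &
}$$
commutes (this is the left square of Diagram (\ref{DiagQk}) applied to $p=g_n(f)$). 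Composing, $\eta^k_X\circ(\text{anything})$ won't directly give a section of $g_n(f)$, so the naive approach fails — instead I pass to cohomology. On cohomology, $(q^k(g_n(f)))^*$ is injective because $\sigma$ is a section: $\sigma^*\circ (q^k(g_n(f)))^* = \mathrm{id}$. Then the triangle gives $(g_n(f))^* = (\eta^k_X)^* \circ (q^k(g_n(f)))^*$, and I need the right-hand composite to be injective. But injectivity of $(q^k(g_n(f)))^*$ alone does not force injectivity of the composite unless $(\eta^k_X)^*$ is injective on the image — which is false in general. So the real argument must be more refined: I should instead show directly that $(q^k(g_n(f)))^*$ being split-injective forces $(g_n(f))^*$ to be injective, using the fibrewise structure.

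The right way, I expect, is to exploit that $Q^k_X(G_n(f))\to X$ has fibre $Q^k(F_n(f))\simeq \Omega^k\widetilde{\Sigma}^k F_n(f)$, and to run a Serre spectral sequence / module-over-$H^*(X)$ argument: a section of $q^k(g_n(f))$ means $H^*(Q^k_X(G_n(f)))$ is a free $H^*(X)$-module summand containing $H^*(X)$, and the point is that the map $G_n(f)\to Q^k_X(G_n(f))$ over $X$ induces on fibres the coaugmentation $F_n(f)\to \Omega^k\widetilde{\Sigma}^k F_n(f)$, which is $(2m)$-connected-ish and in any case \emph{injective in cohomology in a stable range} — but more to the point, by the classical $H\secat$ characterization I only need the top class: $g_n(f)$ induces an injection iff the fundamental/obstruction class in $H^*(X)$ (the one detecting $\secat$) does not pull back to zero, and a section of the $Q^k$-fibration provides a cohomology class in $H^*(Q^k_X(G_n(f)))$ restricting to the generator, which via $(\eta^k_X)^*$ (surjective onto the relevant piece since $\eta^k$ is a stable equivalence inducing iso on $H^*$ through a range containing the relevant degree, or because $\widetilde\Sigma^k$-suspension is a cohomology iso after shifting) pulls back to a corresponding nonzero class in $H^*(G_n(f))$. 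I would formalize this by showing that $(\eta^k_X)^*:H^*(Q^k_X(G_n(f)))\to H^*(G_n(f))$ is surjective: indeed fibrewise over $X$ it is $(\eta^k_{F_n(f)})^*$, and $\eta^k:F\to Q^k F\simeq \Omega^k\widetilde\Sigma^k F$ induces a surjection in cohomology (the stabilization map has a retraction in cohomology — this follows because $\widetilde\Sigma^k$ is faithful on cohomology and $\Omega^k$-adjunction, or simply from the James splitting in the relevant range). Then surjectivity of $(\eta^k_X)^*$ combined with split-injectivity of $(q^k(g_n(f)))^*$ and the commuting triangle yields split-injectivity of $(g_n(f))^*$, i.e. $H\secat(f)\leq n$.

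The main obstacle I anticipate is precisely establishing that $(\eta^k_X)^*$ is surjective in cohomology (equivalently, that the fibrewise stabilization $F_n(f)\to Q^k(F_n(f))$ induces a cohomology surjection), since $Q^kF\simeq\Omega^k\widetilde\Sigma^k F$ is an infinite loop-ish space whose cohomology is generally much larger than that of $F$; one cannot expect surjectivity integrally, so one must either restrict to the relevant cohomological degrees (the obstruction to sectioning lives in a specific degree determined by $\secat$) or invoke that the stable map $F\to Q^kF$ admits a stable retraction after one suspension (which it does: $\widetilde\Sigma F\to\widetilde\Sigma Q^k F$ is a split injection of spectra), making $\widetilde\Sigma(\eta^k)$ — and hence, by the suspension isomorphism, $\eta^k$ itself — a cohomology surjection. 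I would verify this splitting statement carefully, as it is the linchpin that makes the whole argument go through, and then the rest is formal diagram-chasing using the triangle and Proposition \ref{funct+pullback}.
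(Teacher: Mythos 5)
Your reduction to showing $H\secat(f)\le Q^k\secat(f)$ is right, and your instinct that the naive triangle argument fails is correct, but the repair you propose has two genuine gaps. First, the surjectivity of $(\eta^k_X)^*\colon H^*(Q^k_X(G_n(f)))\to H^*(G_n(f))$ is not established: the splitting $\widetilde{\Sigma}^k Z\to\widetilde{\Sigma}^k\Omega^k\widetilde{\Sigma}^k Z\to\widetilde{\Sigma}^k Z$ only gives that the \emph{fibre-level} coaugmentation $\eta^k\colon F_n(f)\to Q^k(F_n(f))$ is a cohomology surjection, and surjectivity on fibres does not pass to the total space of a fibrewise map without a further argument (a fibrewise version of the evaluation splitting, or a spectral sequence comparison, neither of which is automatic). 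Second, and more seriously, even granting that $(\eta^k_X)^*$ is surjective and that $(q^k(g_n(f)))^*$ is split injective, the factorization $(g_n(f))^*=(\eta^k_X)^*\circ(q^k(g_n(f)))^*$ does not force $(g_n(f))^*$ to be injective: a surjection need not be injective on the image of the first map (take $a\colon{\Bbbk}\to{\Bbbk}^2$, $x\mapsto(x,0)$, and $b\colon{\Bbbk}^2\to{\Bbbk}$, $(x,y)\mapsto y$; then $a$ is split injective, $b$ is surjective, and $b\circ a=0$). So the final step "surjectivity plus split-injectivity yields split-injectivity of $(g_n(f))^*$" is a non sequitur.

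The paper's proof goes the other way around the diagram and avoids both problems. Instead of trying to come back from $Q^k_X(G_n(f))$ to $G_n(f)$ via $\eta^k_X$, one composes the section $\sigma$ of $q^k(g_n(f))$ with the horizontal maps of Diagram (\ref{DiagQk}), $Q^k_X(G_n(f))\to Q^k(G_n(f))\xrightarrow{\sim}\Omega^k\widetilde{\Sigma}^kG_n(f)$, to obtain $\psi\colon X\to\Omega^k\widetilde{\Sigma}^kG_n(f)$ with $\Omega^k\widetilde{\Sigma}^k(g_n(f))\circ\psi\simeq\tilde{\eta}^k_X$, the coaugmentation of $X$ itself. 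Since $\tilde{\eta}^k_X$ is the adjoint of the identity of $\widetilde{\Sigma}^kX$, the $k$-fold adjoint of $\psi$ is an honest homotopy section of $\widetilde{\Sigma}^k(g_n(f))$; a map admitting a section is injective in cohomology, and the suspension isomorphism then gives injectivity of $H^*(g_n(f))$, i.e. $H\secat(f)\le n$. This is exactly the adjunction trick you were circling around with your "stable retraction after one suspension" remark, but applied globally to $X\to\Omega^k\widetilde{\Sigma}^kG_n(f)$ rather than fibrewise, which is what makes it work without any surjectivity claim. If you want to salvage your fibrewise route you would need to work with reduced fibrewise cohomology and the fibrewise evaluation $\Sigma^k_XQ^k_X(E)\to\Sigma^k_XE$, which is considerably more delicate than the two-line adjunction argument.
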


\begin{proof} Suppose that $Q^k\secat(f)\leq n$. By applying Diagram (\ref{DiagQk}) to the $n$th Ganea fibration of $f$, we obtain the following commutative diagram:
$$\xymatrix{
G_n(f) \ar[d]^{g_n(f)} \ar[r] & Q^k_X(G_n(f)) \ar[d]^{q^k(g_n(f))}\ar[r] & Q^k(G_n(f))\ar[d]^{Q^k(g_n)} \ar[r]^{\sim} &\Omega^k\widetilde{\Sigma}^ k G_n(f)\ar[d]^{\Omega^ k\widetilde{\Sigma}^kg_n(f)}\\
X \ar@{=}[r] & X \ar[r]^{\eta^k}& Q^k(X)\ar[r]^{\sim} &\Omega^k\widetilde{\Sigma}^k X.
}$$
The composite $\tilde{\eta}^ k:X\to Q^k(X)\stackrel{\sim}{\to}\Omega^k\widetilde{\Sigma}^k X$, which is the usual coaugmentation, can be identified to the adjoint of the identity of $\widetilde{\Sigma}^ kX$ through the $k$-fold adjunction.
From $Q^k\secat(f)\leq n$ we know that there exists a map $\psi: X\to \Omega^k\widetilde{\Sigma}^ k G_n(f)$ such that $\Omega^ k\widetilde{\Sigma}^k(g_n(f))\psi \simeq \tilde{\eta}^ k$. Through the $k$ fold adjunction, we get a homotopy section of $\widetilde{\Sigma}^k(g_n(f))$ which implies that $H^ *(g_n(f))$ is injective.
\end{proof}

When $f=\Delta:X\to X\times X$, $\nil\ker\Delta^*$ coincides with Farber's \textit{zero-divisor cup-length}, that is, the nilpotency of the kernel of the cup-product $\cup_X: H^*(X;{\Bbbk})\otimes H^*(X;{\Bbbk})\to H^*(X;{\Bbbk})$ and we have:

\begin{cor}\label{zcl} Let $X$ be a path-connected CW-complex. For any $k\geq 0$, we have $\nil\ker\cup_X\leq Q^k\TC(X)$.\\
\end{cor}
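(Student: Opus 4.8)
The plan is to deduce Corollary \ref{zcl} directly from Theorem \ref{QsecatHsecat} by specializing to the cofibration $f=\Delta:X\to X\times X$. First I would recall that, when $X$ is a CW-complex, $\Delta$ is indeed a (closed, pointed) cofibration into a well-pointed path-connected space, so Theorem \ref{QsecatHsecat} applies with $f=\Delta$ and gives, for every $k\geq 0$,
$$\nil\ker\Delta^*\leq H\secat(\Delta)\leq Q^k\secat(\Delta)=Q^k\TC(X).$$

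Next I would identify the term $\nil\ker\Delta^*$ with Farber's zero-divisor cup-length. The point is that the map induced in cohomology by the diagonal, $\Delta^*\colon H^*(X\times X;{\Bbbk})\to H^*(X;{\Bbbk})$, is, under the K\"unneth isomorphism $H^*(X\times X;{\Bbbk})\cong H^*(X;{\Bbbk})\otimes H^*(X;{\Bbbk})$ (valid since ${\Bbbk}$ is a field), precisely the cup-product map $\cup_X\colon H^*(X;{\Bbbk})\otimes H^*(X;{\Bbbk})\to H^*(X;{\Bbbk})$. Hence $\ker\Delta^*=\ker\cup_X$ as ideals in $H^*(X;{\Bbbk})\otimes H^*(X;{\Bbbk})$, and therefore their indices of nilpotency agree: $\nil\ker\Delta^*=\nil\ker\cup_X$. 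Combining this identification with the chain of inequalities above yields $\nil\ker\cup_X\leq Q^k\TC(X)$ for all $k\geq 0$, which is the claim.

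The argument is essentially immediate once Theorem \ref{QsecatHsecat} is in hand, so there is no real obstacle; the only point requiring a word of care is the K\"unneth identification, which is exactly why the corollary is stated over a field and why the hypothesis that $X$ be a CW-complex is imposed (ensuring $\Delta$ is a cofibration and that the constructions of the previous sections apply to $f=\Delta$). One may also note, as a free bonus, that the same reasoning gives $\nil\ker\cup_X\leq H\TC(X)\leq \TC(X)$, recovering Farber's classical zero-divisor cup-length lower bound for $\TC$; but for the statement as given it suffices to invoke Theorem \ref{QsecatHsecat} and the K\"unneth identification of $\ker\Delta^*$ with $\ker\cup_X$.
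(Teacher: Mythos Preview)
Your proof is correct and follows exactly the paper's approach: the corollary is stated immediately after the observation that, for $f=\Delta$, $\nil\ker\Delta^*$ coincides with the zero-divisor cup-length, and is then an immediate specialization of Theorem~\ref{QsecatHsecat}. Your added remark about the K\"unneth identification simply makes explicit what the paper leaves implicit.
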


\subsection{Products and fibrations}

We here study the behaviour of $Q\secat$ and $Q\TC$ with respect to products and fibrations and establish generalizations of well-known properties of LS-category and topological complexity. Classically these properties are proven using the open cover definition of these invariants. In order to be able to obtain generalizations to  $Q\secat$, $Q\TC$ and $Q\cat$, we first need a proof of the classical property based on the Ganea fibrations.

\begin{thm}\label{productQsecat} Let $f:A\to X$ and $g:B\to Y$ be two cofibrations. Then, for any $k\geq 0$, $Q^{k}\secat(f\times g)\leq Q^k\secat(f)+Q^k\secat(g)$.

\end{thm}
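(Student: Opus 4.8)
The plan is to reproduce the classical inequality $\secat(f\times g)\le\secat(f)+\secat(g)$ purely in terms of Ganea fibrations and then read off the $Q^k$‑version. Put $m=Q^k\secat(f)$ and $n=Q^k\secat(g)$, so there are (homotopy) sections $\sigma$ of $q^k(g_m(f))$ and $\tau$ of $q^k(g_n(g))$. Everything reduces to producing a (homotopy) section of $q^k(g_{m+n}(f\times g))$, which I would obtain as the composite over $X\times Y$
$$X\times Y\ \to\ Q^k_X(G_m(f))\times Q^k_Y(G_n(g))\ \to\ Q^k_{X\times Y}\bigl(G_m(f)\times G_n(g)\bigr)\ \to\ Q^k_{X\times Y}\bigl(G_{m+n}(f\times g)\bigr),$$
where the first arrow is $\sigma\times\tau$, the second is the comparison map of Proposition \ref{mapproduct} applied to the fibrations $g_m(f)\colon G_m(f)\to X$ and $g_n(g)\colon G_n(g)\to Y$, and the third is obtained by applying the $Q^k_{-}$ construction (Diagram (\ref{diag2})) to a commutative square exhibiting a map $\Phi_{m,n}\colon G_m(f)\times G_n(g)\to G_{m+n}(f\times g)$ over the identity of $X\times Y$. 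Since all three arrows lie over $X\times Y$, the composite is a section, so $Q^k\secat(f\times g)\le m+n$, which is the assertion.

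Thus the entire content is the existence of $\Phi_{m,n}$ over $X\times Y$, i.e. the Ganea‑fibration form of the classical product inequality for sectional category. I would build $\Phi_{m,n}$ by induction on $m+n$. For $m=n=0$ one checks from the explicit models that $G_0(f\times g)=G_0(f)\times G_0(g)$ over $X\times Y$ — a path in $X\times Y$ whose endpoint lies in $f(A)\times g(B)$ is exactly a pair (path ending in $f(A)$, path ending in $g(B)$) — so $\Phi_{0,0}$ is the identity. For the inductive step $(m,n)\rightsquigarrow(m+1,n)$ (and symmetrically $(m,n+1)$) I would use the presentation of $g_{r+1}(h)$ as the fibrewise join of $g_r(h)$ with $g_0(h)$ (equivalently, $G_{r+1}(h)$ is built from $G_r(h)$ by a fibrewise cone on the fibre $F_r(h)$), together with the Join Theorem (\cite{Doeraene}), Proposition \ref{funct+pullback} and property (b) of the Ganea construction recalled in Section \ref{Ganeafibrations}: presenting $G_{m+1}(f)\times G_n(g)$ as the fibrewise join over $X\times Y$ of $g_m(f)\times g_n(g)$ with $g_0(f)\times g_n(g)$, I would map the first factor into $G_{m+n+1}(f\times g)$ by $\Phi_{m,n}$ followed by the natural map $G_{m+n}(f\times g)\to G_{m+n+1}(f\times g)$, and extend along the join direction using the key observation that the resulting map restricted to $F_m(f)\times G_n(g)$ factors, fibrewise over $X\times Y$, through a fibre $F_{m+n}(f\times g)$ of $g_{m+n}(f\times g)$ and hence becomes nullhomotopic in $G_{m+n+1}(f\times g)$ (it factors through the contractible cone that produces $G_{m+n+1}$ from $G_{m+n}$); the remaining factor $g_0(f)\times g_n(g)$ is handled by the evident fibrewise join comparison maps (entirely analogous to the maps $\Sigma^k Z\times Z'\to\Sigma^k(Z\times Z')$ and the stabilizations $b^k$ used in Section \ref{Qconstruction} and Proposition \ref{mapproduct}).

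The hard part is precisely this construction of $\Phi_{m,n}$. There is no naive fibrewise map already at the fat‑wedge level: the existential condition defining $T^{\bullet}$ cannot be made to "align" under any coordinatewise map $X^{m+1}\times Y^{n+1}\to (X\times Y)^{m+n+1}$ that is compatible with the diagonals, so one is forced to use the fibrewise‑join structure genuinely. The delicate point is to carry out every gluing, cone and nullhomotopy in the inductive step \emph{fibrewise over} $X\times Y$, so that $\Phi_{m,n}$ really is a map over $X\times Y$ — at worst one gets a square commuting only up to homotopy over $X\times Y$, which can then be rectified to a strictly commuting square of fibrations over $X\times Y$ by a further fibrant replacement, so that Diagram (\ref{diag2}) applies. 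Once $\Phi_{m,n}$ is in hand, the rest of the argument — applying $Q^k_{-}$ via (\ref{diag2}), composing with the map of Proposition \ref{mapproduct}, and composing with $\sigma\times\tau$ — is formal.
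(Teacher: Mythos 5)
Your overall architecture is exactly that of the paper: everything is reduced to producing a map $\Phi_{m,n}\colon G_m(f)\times G_n(g)\to G_{m+n}(f\times g)$ over $X\times Y$, after which one applies the naturality of the $Q^k_{-}$ construction (Diagram (\ref{diag2})), composes with the map of Proposition \ref{mapproduct}, and feeds in the two sections; this formal part of your argument is correct and coincides with the paper's proof. The difference is that the paper does not construct $\Phi_{m,n}$: it quotes it from \cite[Pg.~27]{GGV15} (a map ``from the product of fibrewise joins of two fibrations to the fibrewise join of the fibration product''). You attempt to build it by induction, and it is there that your argument has a genuine gap.

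The inductive step does not work as stated. First, $G_{m+1}(f)\times G_n(g)$ is \emph{not} the fibrewise join over $X\times Y$ of $g_m(f)\times g_n(g)$ with $g_0(f)\times g_n(g)$: the fibre of that fibrewise join over $(x,y)$ is $\bigl(F_m(f)\times F_n(g)\bigr)*\bigl(F_0(f)\times F_n(g)\bigr)$, whereas the fibre of $G_{m+1}(f)\times G_n(g)$ is $\bigl(F_m(f)* F_0(f)\bigr)\times F_n(g)$; only a comparison map $(A* B)\times C\to (A\times C)*(B\times C)$ is available. Using it, the two pieces you can actually map out of, via $\Phi_{m,n}$ and $\Phi_{0,n}$, have fibrewise join $G_{m+n}(f\times g)*_{X\times Y}G_{n}(f\times g)$, which is (a join of $m+2n+2$ copies of $f\times g$, hence) $G_{m+2n+1}(f\times g)$ rather than $G_{m+n+1}(f\times g)$; iterating, the induction yields a bound far weaker than $m+n$. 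Your proposed repair --- extending over the cone on $F_m(f)\times G_n(g)$ because its image becomes nullhomotopic in $G_{m+n+1}(f\times g)$ --- requires that nullhomotopy to be \emph{vertical} over $\{\ast\}\times Y$, and the natural candidate (coning off towards a point of the extra join factor $F_0(f\times g)$ in each fibre) requires a continuous choice of such points over $Y$, i.e.\ essentially a section of $g_0(g)$, which is exactly what one does not have. What does work, and is the content of the cited construction in \cite{GGV15}, is a single explicit ``subdivision'' map from the product of the fibrewise join of $m+1$ copies of $f$ with the fibrewise join of $n+1$ copies of $g$ to the fibrewise join of $m+n+1$ copies of $f\times g$: writing elements of the two joins in barycentric coordinates $(t_i)$ and $(s_j)$, one compares the partial sums $t_0+\cdots+t_i$ and $s_0+\cdots+s_j$; the nonempty overlaps of consecutive partial-sum intervals are indexed injectively by $i+j\in\{0,\dots,m+n\}$, and the $(i+j)$th coordinate of the image is the pair $(e_i,e'_j)$ weighted by the length of that overlap. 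This map is over $X\times Y$ by construction and replaces your induction; the rest of your proof then goes through unchanged.
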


\begin{proof}
From the diagram constructed in \cite[Pg. 27]{GGV15} (from the product of fibrewise joins of two fibrations to the fibrewise join of the fibration product) we can deduce the existence of a commutative diagram of the following form:
$$\xymatrix{
G_n(f) \times G_m(g)\ar[rr] \ar[rd]_{g_n(f)\times g_m(g)} && G_{m+n}(f\times g) \ar[dl]^{g_{n+m}(f\times g)}\\
& X\times Y.
}$$
By applying the $Q^{k}_{X\times Y}$ construction and using the map of Proposition \ref{mapproduct} we obtain:
$$\resizebox{1\textwidth}{!}{
\xymatrix{
Q^ k_X(G_n(f))\times Q^ k_Y(G_m(g)) \ar[r]\ar[rd]_{q^ k(g_n(f))\times q^ k(g_m(g))}&Q^{k}_{X\times Y}(G_n(f) \times G_m(g))\ar[r] \ar[rd]_{q^{k}(g_n(f)\times g_m(g))} & Q^{k}_{X\times Y}(G_{m+n}(f\times g)) \ar[d]^{q^{k}(g_{n+m}(f\times g))}\\
&X\times Y \ar@{=}[r]& X\times Y.
}}$$
From this diagram, we can establish that, if $Q^k\secat(f)\leq n$ and $Q^k\secat(g)\leq m$ then $Q^{k}\secat(f\times g)\leq n+m$.
\end{proof}

\begin{cor} Let $X$ and $Y$ be path-connected CW-complexes. For any $k\geq 0$, we have $Q^k\TC(X\times Y)\leq Q^k\TC(X)+Q^k\TC(Y)$.
\end{cor}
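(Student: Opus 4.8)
The plan is to derive this as an immediate consequence of Theorem \ref{productQsecat}, exactly as Corollary \ref{firstpropQTC} follows from the preceding proposition. The key observation is that $\TC(X)$ is by definition $\secat(\Delta_X:X\to X\times X)$, so $Q^k\TC(X)=Q^k\secat(\Delta_X)$, and similarly for $Y$; what remains is to identify $Q^k\TC(X\times Y)$ with the product side of Theorem \ref{productQsecat} applied to $f=\Delta_X$ and $g=\Delta_Y$.

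First I would recall that $\Delta_{X\times Y}:X\times Y\to (X\times Y)\times(X\times Y)$ is, up to the obvious coordinate-permutation homeomorphism $\sigma:(X\times Y)\times(X\times Y)\stackrel{\cong}{\to}(X\times X)\times(Y\times Y)$ swapping the second and third factors, nothing but the product map $\Delta_X\times\Delta_Y:X\times Y\to(X\times X)\times(Y\times Y)$. That is, $\sigma\circ\Delta_{X\times Y}=\Delta_X\times\Delta_Y$. Since $\sigma$ is a homeomorphism (in particular a homotopy equivalence) fitting into a commutative square with the identity of $X\times Y$ on top, the invariance of $Q^k\secat$ under such equivalences — which follows from part (a) of the proposition preceding Corollary \ref{firstpropQTC} applied in both directions, or directly from the fact that $G_n$ and $Q^k_{-}$ preserve homotopy equivalences — gives $Q^k\secat(\Delta_{X\times Y})=Q^k\secat(\Delta_X\times\Delta_Y)$. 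Hence $Q^k\TC(X\times Y)=Q^k\secat(\Delta_X\times\Delta_Y)$.

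Then I would simply invoke Theorem \ref{productQsecat} with $f=\Delta_X$ (a cofibration, since $X$ is a CW-complex) and $g=\Delta_Y$, obtaining
$$Q^k\secat(\Delta_X\times\Delta_Y)\leq Q^k\secat(\Delta_X)+Q^k\secat(\Delta_Y)=Q^k\TC(X)+Q^k\TC(Y),$$
which is precisely the claimed inequality.

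I do not expect any serious obstacle here; the only point requiring a little care is the bookkeeping around the permutation homeomorphism $\sigma$ and the verification that $Q^k\secat$ is genuinely invariant under a commutative square whose vertical maps are $\Delta_{X\times Y}$ and $\Delta_X\times\Delta_Y$ with a homeomorphism between the targets and the identity between the sources — but this is exactly the kind of functoriality already used implicitly throughout Section \ref{properties} (it is subsumed by the statement, recalled in Section \ref{Ganeafibrations}, that $\secat$ and its refinements depend only on the weak equivalence class of the map, together with the fact established there that $G_n(\psi,\varphi)$ is an equivalence when $\varphi,\psi$ are). So the proof is a two-line deduction once this identification is spelled out.
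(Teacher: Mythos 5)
Your argument is correct and is exactly the paper's own proof: the authors likewise observe that $\Delta_{X\times Y}$ coincides with $\Delta_X\times\Delta_Y$ up to the homeomorphism switching the two middle factors, so that $Q^k\TC(X\times Y)=Q^k\secat(\Delta_X\times\Delta_Y)$, and then apply Theorem \ref{productQsecat}. No issues.
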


\begin{proof}
Since the diagonal map $\Delta_{X\times Y}:X\times Y \to X\times Y \times X\times Y$ coincides, up to the homeomorphism switching the two middle factors, with the product $\Delta_X \times \Delta _Y$, we have $Q^ k\TC(X\times Y)=Q^ k\secat(\Delta_X \times \Delta _Y)$ and the result follows from the theorem above.
\end{proof}

We now turn to the study of fibrations. We first establish the following result.

\begin{prop}\label{secatdiagram}
Suppose that there exists a commutative diagram
  $$\xymatrix{
  A\ar[d]_f \ar[r] & C\ar[d]_g\\
  X\ar[r]^{\iota}&Y}$$
 in which $f$, $g$ and $\iota$ are cofibrations. Then, for any $k\geq 0$, we have 
 $$Q^k\secat(g)\leq (Q^k\secat(\iota)+1)(\secat(f)+1)-1.$$
\end{prop}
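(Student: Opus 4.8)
The plan is to combine the ``subadditivity'' philosophy familiar from the classical inequality $\secat(g)\le (\secat(\iota)+1)(\secat(f)+1)-1$ with the fact, used repeatedly in the paper, that the $Q^k_X$ construction preserves both sections and homotopy pull-backs. Write $n=Q^k\secat(\iota)$ and $m=\secat(f)$; I want to cover $Y$ (or rather to section an appropriate Ganea-type fibration over $Y$) using $(n+1)(m+1)$ pieces coming from $(n+1)$ pieces that handle $\iota$ after applying $Q^k$ and $(m+1)$ pieces that handle $f$. Concretely, I would first record the naturality square relating $g_N(g)$ to $g_N(\iota)$ and $g_N(f)$ obtained from the commutative square of cofibrations in the hypothesis (using the functoriality of the Ganea construction recalled at the end of Section~\ref{Ganeafibrations}, together with items (a) and (b) there).

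The key step is a ``join/composition'' lemma: from a homotopy section of $q^k(g_n(\iota))\colon Q^k_Y(G_n(\iota))\to Y$ and from the fact that the fibre of $g\colon C\to Y$ restricted over $X$ relates to $f\colon A\to X$ with $\secat(f)\le m$, I want to build a homotopy section of $q^k(g_N(g))\colon Q^k_Y(G_N(g))\to Y$ for $N=(n+1)(m+1)-1$. The mechanism is the standard one: over each of the $n+1$ open sets $U_i\subseteq Y$ on which $\iota$ (hence, after $Q^k$, the relevant map) admits a section landing ``in $X$'', one further subdivides using the $m+1$ sets on which $f$ admits a local section; reassembling via a fibrewise join gives $(n+1)(m+1)$ sets over $Y$ with local sections for $g$, and since the $Q^k_Y$-construction carries sections to (homotopy) sections and is compatible with fibrewise joins (the Ganea fibration $g_N(g)$ being the fibrewise join of $N+1$ copies of $g$, by \cite[Th. 8]{weaksecat}), one gets the desired section of $q^k(g_N(g))$. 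Rather than working with open covers directly, I would phrase this via the Ganea fibrations and the homotopy pull-back $G_n(\iota)\times_Y X \simeq G_n(f)$-type identifications: pulling the diagram over $Y$ back over $X$ and using Proposition~\ref{funct+pullback} to transport $Q^k$ through the pull-back, then composing a section of $q^k(g_n(\iota))$ over $Y$ with a section of $g_m(f)$ over $X$ in an iterated-join argument.

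The main obstacle I expect is bookkeeping the ``mixed'' fibrewise join: one has $Q^k$ applied to a Ganea fibration over $Y$, and over the part lying over $X$ one wants to further join in copies of $f$ which has \emph{not} been stabilized (note the asymmetry $Q^k\secat(\iota)$ versus plain $\secat(f)$ in the statement). So the argument cannot be symmetric in the two maps; instead I would use that a section of $q^k(g_n(\iota))$ over $Y$ restricts, via Proposition~\ref{funct+pullback}, to produce over $X$ a section of the $Q^k$ of the Ganea fibration of the pulled-back map, and then the plain $\secat(f)=m$ bound lets me improve this to a genuine section over all of $Y$ after $(m+1)$-fold fibrewise join with $f$, using the homotopy pull-back property (item (b) at the end of Section~\ref{Ganeafibrations}) to see that the relevant pulled-back fibration is again Ganea-type for $g$. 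Making the index $N=(n+1)(m+1)-1$ come out exactly right, and checking that each intermediate map is still a fibration so that ``homotopy section'' and ``section'' can be used interchangeably, is the delicate part; everything else is formal manipulation of the diagrams already set up in Sections~\ref{Ganeafibrations} and \ref{Qconstruction}.
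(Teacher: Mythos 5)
Your overall strategy---keep the $f$-side unstabilized, stabilize only the $\iota$-side, and combine the two by a join-type composition---has the right shape and matches the paper's argument in outline. But the two steps that actually carry the proof are missing. First, you never use the hypothesis that $\iota$ is a cofibration, and that is where the real work is. A section of $g_p(f)$ pushed into the Ganea construction of $g$ via the functoriality square only gives a map $\lambda\colon X\to G_p(g)$ with $g_p(g)\circ\lambda=\iota$, i.e.\ data defined over $\iota(X)\subset Y$. To concatenate this with the paths constituting a point $(\beta_0,\dots,\beta_m)$ of $G_m(\iota)$ you need such data at \emph{every} endpoint $\beta_i(1)\in Y$, whereas only one of the $m+1$ endpoints is guaranteed to lie in $\iota(X)$. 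The paper resolves this by extending $\lambda$ to $\hat\lambda\colon Y\to\Gamma_p(Y)$ via the relative lifting lemma for the cofibration $\iota$ against the acyclic fibration $\Gamma_p(Y)\to Y$; without that extension your ``composition'' of the two sections is simply not defined. Second, the concatenation then yields a strict map $\lambda_m\colon G_m(\iota)\to G_{pm+p+m}(g)$ over $Y$ \emph{before} any stabilization, and $Q^k_Y$ is applied functorially to this one map, so that a section of $q^k(g_m(\iota))$ composed with $Q^k_Y(\lambda_m)$ sections $q^k(g_{pm+p+m}(g))$. This is the only way $Q^k$ enters.

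By contrast, the tools you lean on are not available. There is no open-cover characterization of $Q^k\secat$, so the ``subdivide each $U_i$ by the $m+1$ sets for $f$'' mechanism has no meaning after stabilization; and no compatibility of $Q^k_Y$ with fibrewise joins is established in the paper (nor holds in any naive sense---$Q^k$ of a join is not a join of $Q^k$'s), which is precisely why the paper arranges for all join/concatenation manipulation to happen at the level of Ganea fibrations first. Finally, the identification ``$G_n(\iota)\times_Y X\simeq G_n(f)$'' is not correct: the square in the hypothesis is not assumed to be a homotopy pull-back, and the pull-back of the Ganea fibration of $\iota$ over $X$ is unrelated to $f$; the section of $g_p(f)$ you need comes directly from $\secat(f)\le p$, not from Proposition \ref{funct+pullback}. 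As written, the proposal therefore has a genuine gap at its central step.
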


\begin{proof} Suppose that $\secat(f)\leq p$. Then the fibration $g_p(f)$ admits a section. Using the commutative diagram at the bottom of page 3, we obtain from this section a map $\lambda: X\to G_p(g)$ such that $g_p(g)\circ \lambda=\iota$. Explicitly, for $x\in X$, $\lambda(x)$ is an element $(\gamma_0,\cdots,\gamma_p)\in \Gamma_p(Y)$ such that $\gamma_0(0)=\cdots=\gamma_p(0)=\iota(x)$ and at least one path $\gamma_j$ satisfies $\gamma_j(1)\in \iota f(A)\subset g(C)$. Since $\iota$ is a cofibration and the fibration $\Gamma_p(Y)\to Y$, $(\gamma_0,\cdots,\gamma_p)\mapsto \gamma_0(0)$ is a homotopy equivalence, the relative lifing lemma (\cite[Th. 9]{NoteonCofII}) in the diagram
$$\xymatrix{
X\ar@{ >->}[d]_{\iota}\ar[r]^-{\lambda} & \Gamma_p(Y)\ar@{->>}[d]^{\sim}\\
Y \ar@{=}[r] \ar@{.>}[ru]^{\hat{\lambda}}&Y
}
$$
permits us to extend $\lambda$ to a map $\hat{\lambda}:Y \to \Gamma_p(Y)$ which takes a point $y\in Y$ to an element $(\gamma_0,\cdots,\gamma_p)\in \Gamma_p(Y)$ where all the $\gamma_i$ start in $y$ and at least one path $\gamma_j$ satisfies $\gamma_j(1)\in \iota f(A)$ whenever $y\in \iota(X)$.
The concatenation of a path $\beta_i$ with the path $\hat{\lambda}(\beta_i(1))$ gives, for any $m$, a map
$\lambda_m:G_m(\iota)\longrightarrow G_{pm+p+m}(g)$ such that $g_{pm+p+m}(g)\circ \lambda_m =g_m(\iota)$. As a consequence, we have for any $m\geq 0$ and any $k\geq 0$, a commutative diagram of the following form:
$$\xymatrix{
Q^k_Y(G_m(\iota))\ar[rr]^{Q^k_Y(\lambda_m)} \ar[rd]_{q^ k{(g_m(\iota))}} && Q^k_Y(G_{pm+p+m}(g)) \ar[ld]^{  \,\,\,\,q^ k{(g_{pm+p+m}(g))}}\\
&Y&
}$$
Therefore, if $Q^k\secat(\iota)\leq m$, then there exists a section of the right-hand fibration and we can conclude that $Q^k\secat(g)\leq pm+p+m=(m+1)(p+1)-1$.
\end{proof}

As a consequence we obtain the following property of $Q^k\secat$ (and $\secat$ when $k=0$):

\begin{thm}\label{secatfibration}
Let $\xymatrix{F\ar[r]^{\iota}&E\ar[r]^{\pi}&B}$ be a fibration over a well-pointed space. Suppose that there exists a commutative diagram
  $$\xymatrix{
  A\ar[d]_f \ar[r] & C\ar[d]_g\\
  F\ar[r]^{\iota}&E\ar[r]^{\pi}&B}$$
 in which $f$ and $g$ are cofibrations. Then, for any $k\geq 0$, we have 
 $$Q^k\secat(g)\leq (Q^k\cat(B)+1)(\secat(f)+1)-1.$$
\end{thm}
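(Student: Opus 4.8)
The plan is to deduce Theorem~\ref{secatfibration} from Proposition~\ref{secatdiagram} by replacing the (possibly non-cofibration) map $\iota:F\to E$ appearing in the hypothesis with a genuine cofibration whose sectional category is controlled by $\cat(B)$. First I would recall that for any fibration $\pi:E\to B$ over a well-pointed path-connected space, the inclusion of the fibre $\iota:F\hookrightarrow E$ satisfies $\secat(\iota)\le\cat(B)$; more precisely, since $\iota$ is (weakly equivalent to) the inclusion of the homotopy fibre, and the Ganea fibration $g_n(\iota):G_n(\iota)\to E$ is obtained by pulling back $g_n(B):G_n(B)\to B$ along $\pi$, a section of $g_n(B)$ pulls back to a section of $g_n(\iota)$. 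Applying the $Q^k_E$ construction and invoking Proposition~\ref{funct+pullback} (the square is a homotopy pull-back), a section of $q^k(g_n(B))$ yields a section of $q^k(g_n(\iota))$, hence $Q^k\secat(\iota)\le Q^k\cat(B)$.

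Next I would address the cofibration issue. The map $\iota:F\to E$ in the statement need not be a cofibration, so Proposition~\ref{secatdiagram} does not apply verbatim. The fix is standard: factor $\iota$ as $F\xrightarrow{\sim} F' \xhookrightarrow{\iota'} E$ where $\iota'$ is a cofibration and $F'\to F$ a homotopy equivalence (mapping cylinder), and simultaneously replace $f:A\to F$ by a weakly equivalent cofibration $f':A'\to F'$ (again via a mapping cylinder, adjusting $A$). Since $\secat$ and $Q^k\secat$ are homotopy invariants of the relevant maps (as recalled in Section~\ref{Ganeafibrations} and the Remark after the definition of $Q^k\secat$), we have $\secat(f')=\secat(f)$ and $Q^k\secat(\iota')=Q^k\secat(\iota)\le Q^k\cat(B)$ by the previous paragraph. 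One must check that the composite square
$$\xymatrix{ A' \ar[d]_{f'} \ar[r] & C \ar[d]^{g}\\ F' \ar[r]^{\iota'} & E }$$
can be arranged to commute — this follows by composing the homotopy equivalences with the original maps and, if necessary, replacing $g:C\to E$ by a weakly equivalent cofibration so that $Q^k\secat(g)$ is unchanged and the square is strictly commutative.

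With these replacements in place, Proposition~\ref{secatdiagram} applied to the diagram with $\iota'$, $f'$, $g$ gives
$$Q^k\secat(g)\le (Q^k\secat(\iota')+1)(\secat(f')+1)-1\le (Q^k\cat(B)+1)(\secat(f)+1)-1,$$
which is the desired inequality. The main obstacle I anticipate is the bookkeeping in the second step: making all the cofibration replacements compatible so that a single strictly commutative square feeds into Proposition~\ref{secatdiagram}, while verifying that none of the invariants $\secat(f)$, $Q^k\secat(\iota)$, $Q^k\secat(g)$ changes under these replacements. Everything else — the pull-back identification of $g_n(\iota)$ with the pull-back of $g_n(B)$, and the transfer of sections through $Q^k_E$ via Proposition~\ref{funct+pullback} — is routine given the machinery already set up in Sections~\ref{Ganeafibrations} and~\ref{Qconstruction}.
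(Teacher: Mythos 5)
Your main line of argument coincides with the paper's: you prove $Q^k\secat(\iota)\leq Q^k\cat(B)$ by observing that the square relating $g_n(\iota)$ to $g_n(B)$ over $\pi$ is a homotopy pull-back (via item (b) of Section \ref{Ganeafibrations} and Proposition \ref{funct+pullback}), and you then feed this into Proposition \ref{secatdiagram}. That is exactly the paper's proof, and that part is fine.

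The gap is in your second step, the cofibration replacement, which is both unnecessary and not actually carried out correctly as sketched. It is unnecessary because the hypothesis that $B$ is well-pointed is there precisely to guarantee that $\iota$ \emph{is} a cofibration: by Str{\o}m's theorem (\cite[Th.\ 12]{NoteonCofII}, already used for this purpose in the paper), the preimage under a fibration of a cofibered subspace of the base is cofibered in the total space, so $F=\pi^{-1}(\ast)\hookrightarrow E$ is a closed cofibration and Proposition \ref{secatdiagram} applies verbatim. Your proposed workaround, as stated, does not work: a mapping cylinder factors $\iota$ as a cofibration $F\rightarrowtail M_\iota$ followed by a homotopy equivalence $M_\iota\stackrel{\sim}{\to}E$, i.e.\ it changes the \emph{target}, whereas you claim a factorization $F\stackrel{\sim}{\to}F'\rightarrowtail E$ with the target $E$ unchanged; there is no general construction producing that. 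If you insist on replacing the target by $M_\iota$, then $g:C\to E$ no longer lands in the same space as $\iota'$, and restoring a strictly commutative square with all three maps cofibrations is exactly the ``bookkeeping'' you defer and never resolve (composing $g$ with a homotopy inverse $E\to M_\iota$ only gives homotopy commutativity, which Proposition \ref{secatdiagram} does not accept as stated). The repair is simply to delete this step and invoke Str{\o}m's theorem; with that, your argument becomes the paper's proof.
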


\begin{proof} Since $B$ is well-pointed, $\iota$ is a cofibration (\cite[Th. 12]{NoteonCofII}). By Proposition \ref{secatdiagram}, we have $Q^k\secat(g)\leq (Q^k\secat(\iota)+1)(\secat(f)+1)-1$. On the other hand, 
since the following diagram
$$\xymatrix{
Q^k_{E}(G_m(\iota)) \ar[r]\ar[d]_{q^ k{(g_m(\iota))}} & Q^k_{B}(G_n(B)) \ar[d]^{q^ k{(g_m(B))}}\\
E \ar[r]_{\pi} &B
}$$
is a homotopy pull-back, we have $Q^k\secat(\iota)\leq Q^k\cat(B)$ and the result follows.
\end{proof}

\begin{cor}\label{Corfibration}
Let 
$\xymatrix{F\ar[r]^{\iota}&E\ar[r]^{\pi}&B}$ 
be a fibration where all the spaces are well-pointed path-connected CW-complexes. For any $k\geq 0$, we have 
\begin{enumerate}
\item[(a)] $Q^k\cat(E)\leq (Q^k\cat(B)+1)(\cat(F)+1)-1$.
\item[(b)] $Q^k\TC(E)\leq (Q^k\cat(B\times B)+1)(\TC(F)+1)-1$.
\end{enumerate}
\end{cor}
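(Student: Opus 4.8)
The plan is to deduce both inequalities directly from Theorem~\ref{secatfibration} by supplying, in each case, a suitable fibration over a well-pointed space together with a commutative square of cofibrations; no new ideas are needed beyond choosing the right data.

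For (a) I would apply Theorem~\ref{secatfibration} to the given fibration $F\xrightarrow{\iota}E\xrightarrow{\pi}B$ and the square
$$\xymatrix{
\ast \ar[d] \ar[r] & \ast \ar[d]\\
F \ar[r]^{\iota} & E
}$$
whose vertical maps are the base-point inclusions. This square commutes because $\iota$ is pointed, and both vertical maps are cofibrations since $F$ and $E$ are well-pointed. Now $\secat(\ast\to F)=\cat(F)$ and $Q^k\secat(\ast\to E)=Q^k\cat(E)$, so Theorem~\ref{secatfibration} gives precisely $Q^k\cat(E)\leq (Q^k\cat(B)+1)(\cat(F)+1)-1$.

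For (b) I would instead use the product fibration $F\times F\to E\times E\to B\times B$, which is a fibration over the well-pointed space $B\times B$ with fibre $F\times F$, together with the square
$$\xymatrix{
F \ar[d]_{\Delta_F} \ar[r]^{\iota} & E \ar[d]^{\Delta_E}\\
F\times F \ar[r]^{\iota\times\iota} & E\times E
}$$
which commutes since $(\iota\times\iota)\circ\Delta_F=\Delta_E\circ\iota$. The diagonal maps $\Delta_F$ and $\Delta_E$ are cofibrations because $F$ and $E$ are CW-complexes. Since $\secat(\Delta_F)=\TC(F)$ and $Q^k\secat(\Delta_E)=Q^k\TC(E)$, while the base of the chosen fibration is $B\times B$, Theorem~\ref{secatfibration} yields $Q^k\TC(E)\leq (Q^k\cat(B\times B)+1)(\TC(F)+1)-1$.

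There is essentially no obstacle here: the only things that need checking are that the maps in each square are cofibrations (guaranteed by well-pointedness for (a), and for (b) by the fact that diagonals of CW-complexes are cofibrations) and that the base space $B\times B$ used in (b) is well-pointed, which holds because $B$ is. Everything else is a mechanical substitution into Theorem~\ref{secatfibration}.
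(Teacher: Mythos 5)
Your proposal is correct and coincides with the paper's own proof: both parts are obtained by applying Theorem~\ref{secatfibration} to exactly the same two squares (the base-point inclusions over $F\xrightarrow{\iota}E\xrightarrow{\pi}B$ for (a), and the diagonals over the product fibration $F\times F\to E\times E\to B\times B$ for (b)). Your additional remarks verifying the cofibration and well-pointedness hypotheses are accurate and only make explicit what the paper leaves implicit.
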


\begin{proof} We apply the theorem above to the following diagrams, respectively:
$$\xymatrix{
  \ast\ar[d] \ar[r] & \ast\ar[d]\\
  F\ar[r]^{\iota}&E\ar[r]^{\pi}&B}
 \quad \quad  \xymatrix{
  F\ar[d]_{\Delta} \ar[r]_{\iota} & E\ar[d]_{\Delta}\\
  F\times F\ar[r]^{\iota\times \iota}&E\times E\ar[r]^{\pi\times \pi}&B\times B.}$$
\end{proof}

\begin{rem} If we consider Corollary \ref{Corfibration} for $k=0$, the formula given in $(a)$ is the classical formula for LS-category while the formula given in $(b)$ corresponds to the formula given in \cite{FarberGrant}. It seems that these two special cases are the only special cases of the formulas established in  Proposition \ref{secatdiagram} and Theorem \ref{secatfibration} which were already known. In particular, Corollary \ref{Corfibration}(a) was not known for $k\geq 1$ and we did not find in the literature formulas corresponding to the case $k=0$ of Theorem \ref{secatfibration}. 
\end{rem}

We finally also establish a generalization to $Q\cat$ of a property of the LS-category due to O. Cornea \cite{Cornea}. We will next use this result in our final remark on the Scheerer-Stanley-Tanr\'e conjecture.

\begin{thm}\label{Cornea} Let $\xymatrix{F\ar[r]^{\iota}&E\ar[r]^{\pi}&B}$ be a fibration where all the spaces are well-pointed path-connected  CW-complexes and let $k\geq 0$. If $Q^ k\cat(B)\geq 1$ or $B$ is simply-connected, then $Q^k\cat(E/F)\leq Q^ k\cat(B)$.
\end{thm}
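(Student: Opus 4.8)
The plan is to imitate Cornea's original argument for LS-category, but now using the Ganea fibrations and the fibrewise $Q^k$ construction so that it applies to $Q^k\cat$. The starting point is the cofibration sequence $F\stackrel{\iota}{\to} E\to E/F$ together with the quasifibration (or, after the usual replacement, fibration) $E/F\to B/\ast=B$ induced by $\pi$; more precisely, collapsing $F$ in $E$ gives a map $\bar\pi:E/F\to B$ whose homotopy fibre is $\Sigma F$ (this is the classical fact underlying Cornea's theorem, and it is exactly where the hypothesis ``$Q^k\cat B\ge 1$ or $B$ simply connected'' is needed — in those cases the relevant fibrewise homotopy-colimit argument goes through and one controls the attaching data). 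So I would first replace the situation by a genuine fibration $p:\widetilde{E/F}\to B$ equivalent to $\bar\pi$, and take the inclusion of a point $\ast\to \widetilde{E/F}$ as the cofibration $g$ to which $Q^k\secat$ is applied, noting $Q^k\secat(\ast\to \widetilde{E/F})=Q^k\cat(E/F)$.

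Next I would set up the comparison of Ganea fibrations along $\bar\pi$. There is a commutative square
$$\xymatrix{\ast\ar[r]\ar[d]&\ast\ar[d]\\ E/F\ar[r]^{\bar\pi}&B}$$
of cofibrations, which after applying the $Q^k$-Ganea construction yields, for every $n$, a commutative triangle over $B$
$$\xymatrix{Q^k_{E/F}(G_n(E/F))\ar[rr]\ar[rd]_{q^k(g_n(E/F))}&&Q^k_B(G_n(B))\ar[dl]^{q^k(g_n(B))}\\ &B&}$$
obtained by composing $G_n(E/F)\to G_n(B)$ (from the naturality of Ganea fibrations recalled at the end of Section \ref{Ganeafibrations}) with $\bar\pi:E/F\to B$ on the base, using the naturality diagram ($\ddag$). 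The point is that a section of $q^k(g_n(B))$ over $B$ would, after composing with this triangle, give a map $B\to Q^k_{E/F}(G_n(E/F))$ lifting $\bar\pi$ rather than $\mathrm{id}_{E/F}$. So the real work is to upgrade this lift of $\bar\pi$ to a genuine section of $q^k(g_n(E/F))$ over $E/F$, i.e.\ to a lift of $\mathrm{id}_{E/F}$.

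This upgrade is the main obstacle, and it is precisely the fibrewise analogue of Cornea's trick. The idea is that the homotopy fibre of $\bar\pi$ is $\Sigma F$, which is a co-$H$-space, so the fibration $\bar\pi$ (or rather its replacement $p$) admits a ``fibrewise co-$H$ retraction'' up to homotopy after one application of a suspension-type construction; concretely, since $Q^k_B(G_n(B))\to B$ is the base of the comparison triangle and its fibre $Q^k(G_n(\ast)\,\mathrm{join})$ is highly connected, the obstruction to lifting $\mathrm{id}_{E/F}$ through the triangle lives in a group that vanishes under the hypothesis $Q^k\cat(B)\ge 1$ (which forces $n\ge 1$, hence $G_n(B)$ and its $Q^k$-thickening are $1$-connected along the fibre) or under $B$ simply connected. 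I would make this precise by the following steps: (i) show $\bar\pi$ has a fibrewise section up to homotopy after collapsing, using that $E/F\to B$ has a section of $\pi$'s ``co-unit'' coming from $E\to E/F$ being the cofibre; (ii) combine this section with the lift of $\bar\pi$ obtained above to produce, via the fibrewise join/co-$H$ structure, a map $E/F\to Q^k_{E/F}(G_n(E/F))$; and (iii) check it lifts $\mathrm{id}_{E/F}$, possibly after enlarging $n$ by $1$ to absorb the co-$H$ comultiplication — but since the inequality claimed is $Q^k\cat(E/F)\le Q^k\cat(B)$ with no $+1$, one must verify that the connectivity hypothesis lets one avoid that extra step, exactly as in Cornea's original proof. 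I expect step (ii)–(iii), i.e.\ producing the honest section from the lift of $\bar\pi$ and the co-$H$ structure on the fibre $\Sigma F$, to be the delicate point, and the role of the hypothesis ``$Q^k\cat(B)\ge 1$ or $B$ simply-connected'' is exactly to kill the $\pi_1$-action / connectivity obstruction that would otherwise force a loss of $1$.
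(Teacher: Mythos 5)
Your overall strategy diverges from the paper's and, as outlined, has a genuine gap at its core. First, the ``classical fact'' you start from is not correct: for a fibration $F\to E\stackrel{\pi}{\to}B$ the homotopy fibre of the induced map $\rho:E/F\to B$ is not $\Sigma F$ but the join $F\ast\Omega B$ (Ganea's fibre--cofibre theorem); it is $\Sigma F$ only when $B$ is contractible. Consequently the obstruction/co-$H$ argument of your steps (ii)--(iii), which rests on a fibrewise co-$H$ structure of a fibre ``$\Sigma F$'', is built on a false premise. Second, even granting the set-up, your comparison triangle does not produce the map you claim: the arrow goes from $Q^k_{E/F}(G_n(E/F))$ to $Q^k_B(G_n(B))$, so composing a section $B\to Q^k_B(G_n(B))$ with it does not land in $Q^k_{E/F}(G_n(E/F))$. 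To whisker a section back one would need the square over $\rho:E/F\to B$ to be a homotopy pull-back, and it is not, precisely because the homotopy fibre of $\rho$ is not contractible. You correctly identify the upgrade from ``lift of $\rho$'' to ``section over $E/F$'' as the main obstacle, but you leave it unproved, and that is where the entire content of the theorem lies.

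The paper's proof avoids both problems by never working directly over $\rho: E/F\to B$. It introduces the relative Ganea fibration $g_n(\iota):G_n(\iota)\to E$ of the cofibration $\iota:F\to E$; since $F\to E\to B$ is a fibration, the square comparing $Q^k_E(G_n(\iota))\to E$ with $Q^k_B(G_n(B))\to B$ \emph{is} a homotopy pull-back (Proposition \ref{funct+pullback}). A pointed section $s$ of $q^k(g_n(B))$ --- and this is exactly where the hypothesis ``$Q^k\cat(B)\geq 1$ or $B$ simply connected'' enters, namely to guarantee that $Q^k_B(G_n(B))$ is path-connected so that $s$ can be chosen pointed, rather than to kill an obstruction group as you suggest --- then whiskers to a section $s'$ of $q^k(g_n(\iota))$ with $s'\circ\iota\simeq\hat\phi$, the canonical map into the fibre. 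Since $\hat\phi$ becomes trivial in $Q^k_{E/F}(G_n(E/F))$, the composite $E\to Q^k_{E/F}(G_n(E/F))$ factors through the cofibre $E/F$, and a comparison of cofibre sequences shows the resulting map is, up to homotopy equivalence, a section of $q^k(g_n(E/F))$. If you wanted to salvage your approach you would have to replace ``homotopy fibre $\Sigma F$'' by $F\ast\Omega B$ and then actually solve the lifting problem against it; the relative-Ganea/pull-back route is both shorter and already set up earlier in the paper.
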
 

\begin{proof} Consider the following commutative diagram where all the vertical maps are cofibrations, the bold square is a homotopy pull-back, $\psi$ is the identification map and $\rho$ is the map induced by $\pi$:
$$\resizebox*{4cm}{3cm}{
\xymatrix{
F \ar@{=}[r] \ar@{=}[dd]& F\ar[rr] \ar@<0.2mm>[rr] \ar[rd] \ar[dd]_{\iota}\ar@<0.2mm>[dd]&& \ast \ar[dd]\ar@<0.2mm>[dd]\\
&& \ast \ar@{=}[ru]\ar[dd]\\
F \ar[r]_{\iota} & E\ar[rr] \ar@<0.2mm>[rr] \ar[rd]_{\psi} && B \\
&& E/F \ar[ru]_{\rho}\\
}
}$$
By taking the Ganea fibrations, we obtain the following commutative diagram where the bold square is a homotopy pull-back:
$$\resizebox*{7cm}{4cm}{
\xymatrix{
F \ar[r]^{\phi} \ar@{=}[dd]& G_n(\iota)\ar[rr] \ar@<0.2mm>[rr] \ar[rd] \ar[dd]_{g_n(\iota)}\ar@<0.2mm>[dd]&& G_n(B) \ar[dd]^{g_n(B)}\ar@<0.2mm>[dd]\\
&& G_n(E/F) \ar@{=}[ru]\ar[dd]^<<<<<{g_n(E/F)}\\
F \ar[r]_{\iota} & E\ar[rr] \ar@<0.2mm>[rr] \ar[rd]_{\psi} && B \\
&& E/F \ar[ru]_{\rho}\\
}
}$$
The map $\phi$ takes a point $f\in F$ to the constant $(n+1)$-tuple of paths $(\hat{f},\ldots,\hat{f})\in G_n(\iota)$. It is clear that the composite of $\phi$ with the map $G_n(\iota)\to G_n(E/F)$ is trivial. We now apply the $Q^k_{-}$ construction to obtain the following diagram
$$\resizebox*{7cm}{4cm}{
\xymatrix{
F \ar[r]^{\hat{\phi}} \ar@{=}[dd]& Q^k_E(G_n(\iota))\ar[rr] \ar@<0.2mm>[rr] \ar[rd] \ar[dd]_{q^k(g_n(\iota))}\ar@<0.2mm>[dd]&& Q^k_B(G_n(B)) \ar[dd]^{q^k(g_n(B))}\ar@<0.2mm>[dd]\\
&& Q^k_{E/F}(G_n(E/F)) \ar@{=}[ru]\ar[dd]^<<<<<{q^k(g_n(E/F))}\\
F \ar[r]_{\iota} & E\ar[rr] \ar@<0.2mm>[rr] \ar[rd]_{\psi} && B \\
&& E/F \ar[ru]_{\rho}\\
}}
$$
where $\hat{\phi}(f)$ is the map $I^k\to \Sigma^k_EG_n(\iota)$ given by $u\mapsto [(\hat{f},\ldots,\hat{f}),u]$. Once again the bold square is a homotopy pull-back and the composite of $\hat{\phi}$ with the map $Q^k_EG_n(\iota)\to Q^k_{E/F}G_n(E/F)$ is trivial (since its sends $f$ to the base point of the space $Q^k_{E/F}(G_n(E/F))$). Suppose that $Q^ k\cat(B)\leq n$. Then the fibration $q^k(g_n(B))$ admits a section $s$ that we can suppose to be pointed (because, under our hypothesis $n\geq 1$ or $B$ simply-connected, the space $Q_B^k(G_n(B))$ is path-connected). By the unicity (up to homotopy) of the whisker map in the bold homotopy pull-back, we then obtain a homotopy section $s'$ of $q^k(g_n(\iota))$ which satisfies $s'\circ \iota\simeq \hat{\phi}$. Therefore the composition of $s'\circ \iota$ with the map $Q^k_E(G_n(\iota))\to Q^k_{E/F}(G_n(E/F))$ is homotopically trivial and $s'$ induces a map $\tilde{s}:E/F \to Q^k_{E/F}(G_n(E/F))$ giving the following diagram:
$$
\xymatrix{
F  \ar@{=}[d]\ar[r]_{\iota} & E\ar[d]_{s'} \ar[rr]_{\psi} && E/F \ar[d]_{\tilde{s}}\\
F \ar[r]^{\hat{\phi}} \ar@{=}[d]& Q^k_E(G_n(\iota))\ar[rr] \ar[rr] \ar[d]_{q^k(g_n(\iota))}\ar[d]&&  Q^k_{E/F}(G_n(E/F))\ar[d]_{q^k(g_n(E/F))} \\
F \ar[r]_{\iota} & E\ar[rr]_{\psi} && E/F \\
}
$$
Since $F\to E\to E/F$ is a cofibre sequence and the left and middle vertical composites are homotopy equivalences we obtain that $q^k(g_n(E/F))\circ\tilde{s}$ is a homotopy equivalence. We can thus conclude that $q^k(g_n(E/F))$ admits a homotopy section and that $Q^k\cat(E/F)\leq n$. 
\end{proof}

\subsection{Condition for $Q\secat=\secat$}

Similarly to \cite[Theorem 15]{Qcat} we have 
\vspace{0.1cm}
\begin{thm} \label{dimconnectivity} Let $f:A\to X$ be a cofibration which is an $(r-1)$-equivalence with $r\secat(f) \geq 3$. If
$\dim (X) \leq {2r \secat(f) -3}$ then $Q\secat(f)=\secat(f)$.
\end{thm}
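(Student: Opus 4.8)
The plan is to adapt the proof of \cite[Theorem 15]{Qcat}, which handles the case $f=(\ast\to X)$, i.e.\ the statement $Q\cat(X)=\cat(X)$ under analogous hypotheses. Write $n=\secat(f)$. The invariant $Q^k\secat(f)$ is a nonincreasing sequence in $k$ bounded below by, say, $\nil\ker f^\ast$ (Theorem \ref{QsecatHsecat}), hence stabilizes; so $Q\secat(f)\leq n$ always holds and the content is the reverse inequality. Equivalently, I must show that for \emph{every} $k$ the fibration $q^k(g_{n-1}(f)):Q^k_X(G_{n-1}(f))\to X$ admits \emph{no} homotopy section, since $\secat(f)=n$ already tells us $g_{n-1}(f)$ itself has no section. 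The strategy is a dimension/connectivity obstruction argument: a section would be a lift of $\mathrm{id}_X$ through $q^k(g_{n-1}(f))$, and one shows that the fibre inclusion of this fibrewise-stabilized Ganea fibration is, in the relevant range of dimensions, ``close enough'' to the fibre inclusion of the unstabilized one that the obstruction to lifting cannot be killed by stabilizing.

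The key steps, in order, would be as follows. First, identify the homotopy fibre of $g_{n-1}(f)$: by the discussion in Section \ref{Ganeafibrations} it is the join $F_{n-1}(f)\simeq F^{\ast n}$ of $n$ copies of the homotopy fibre $F$ of $f$; since $f$ is an $(r-1)$-equivalence, $F$ is $(r-2)$-connected, so $F^{\ast n}$ is $(rn-2)$-connected (an $n$-fold join of $(r-2)$-connected spaces is $(n(r-1)-2)$-connected $=(rn-n-2)$-connected; I will need to be careful with this count and it is exactly where the numeric hypotheses come from). Second, the fibre of $q^k(g_{n-1}(f))$ over the basepoint is $Q^k(F_{n-1}(f))\simeq \Omega^k\widetilde\Sigma^k F_{n-1}(f)$, and the map $b^k$ from Diagram (\ref{DiagSequenceQk}) induces on fibres the stabilization $F_{n-1}(f)\to Q^k(F_{n-1}(f))$. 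Because $F_{n-1}(f)$ is highly connected, the Freudenthal suspension theorem says this stabilization map is an $N$-equivalence for $N$ roughly $2(rn-2)$; here is where $rn\geq 3$ and $\dim X\leq 2rn-3$ enter: in this range, the section-lifting obstruction for the stabilized fibration is detected by the same primary obstruction as for the unstabilized one. Third, assemble the obstruction-theory argument: suppose $q^k(g_{n-1}(f))$ has a homotopy section. Using that $\dim X\leq 2r\secat(f)-3$ lies within the Freudenthal range for the fibre, lift this section back (through the map $G_{n-1}(f)\to Q^k_X(G_{n-1}(f))$, which is a fibrewise $N$-equivalence) to a homotopy section of $g_{n-1}(f)$ itself — this contradicts $\secat(f)=n$. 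The lifting back is the standard ``a map into the base factors through a highly-connected-fibre subfibration when the base has small dimension'' argument, carried out fibrewise and using the naturality of the $Q^k_{-}$ construction together with Proposition \ref{funct+pullback}.

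The main obstacle I expect is making the Freudenthal/obstruction bookkeeping precise enough to extract exactly the bound $\dim X\leq 2r\secat(f)-3$ (with the side condition $r\secat(f)\geq 3$), rather than something off by one or two. Concretely: one needs that the homotopy fibre $F_{n-1}(f)$ has connectivity $c$ with $c=rn-2$ (for an $(r-1)$-equivalence $f$, so $F$ is $(r-2)$-connected and the $n$-fold join bumps connectivity by $n$ to $(r-2)+n=rn-2$ when... actually one must recheck: join of $X$ ($p$-connected) and $Y$ ($q$-connected) is $(p+q+2)$-connected, so $n$ copies of $(r-2)$-connected gives $(n(r-1)-2)$-connected $=rn-n-2$); the stabilization $Z\to QZ$ for $Z$ $c$-connected is a $(2c+1)$-equivalence; to lift a section defined on a $d$-dimensional base through a fibrewise $m$-equivalence one needs $d\leq m-1$; chasing $d=\dim X$ and $c=rn-n-2$ through $m=2c+1$ should reproduce the stated inequality after accounting for the join-dimension contributions, but the exact constants require care. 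A secondary technical point is basepoint/connectedness issues for the stabilized total spaces — these are handled as in the proof of Theorem \ref{Cornea} using $r\secat(f)\geq 3$ to ensure the relevant spaces are simply connected — but the delicate part is genuinely the numerology, and I would model it closely on \cite[Theorem 15]{Qcat} to get it right.
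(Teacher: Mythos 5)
Your strategy is essentially the paper's: identify the fibre of $g_{n-1}(f)$ (where $n=\secat(f)$) as an $n$-fold join of the homotopy fibre of $f$, use Freudenthal to see that the fibrewise stabilization maps are highly connected, and lift a putative section of the stabilized fibration back to a section of $g_{n-1}(f)$ over the low-dimensional base $X$, contradicting $\secat(f)=n$. The paper packages this as an induction on $k$ (Proposition \ref{prop-dim-connexite}): the single step $b^k_X\colon Q^k_X(G_{n-1}(f))\to Q^{k+1}_X(G_{n-1}(f))$ restricts on fibres to a map equivalent to $\Omega^k\tilde{\eta}_{\widetilde{\Sigma}^kF_{n-1}(f)}$, hence is a $(2rn-3+k)$-equivalence, and a section is lifted back one stage at a time. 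Your one-shot lift through $\eta^k_X$ is the composite of these steps and is governed by the worst case $k=0$, so the two bookkeepings agree; this is a presentational difference, not a different proof.

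The real problem is that the one computation you yourself flagged as the delicate point comes out wrong in your write-up, and with your value the argument does not close. Iterating the rule that the join of a $p$-connected and a $q$-connected space is $(p+q+2)$-connected, the join of $n$ copies of an $(r-2)$-connected space has connectivity $n(r-2)+2(n-1)=rn-2$, not $n(r-1)-2=rn-n-2$ as you state (twice). With the correct value $c=rn-2$, the hypothesis $rn\geq 3$ makes $F_{n-1}(f)$ simply connected so Freudenthal applies, the fibrewise stabilization is a $(2c+1)=(2rn-3)$-equivalence, and $\dim X\leq 2rn-3$ is exactly the range in which the section lifts back --- reproducing the stated bound on the nose. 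With your value $c=rn-n-2$ the lifting range would be $\dim X\leq 2rn-2n-3$, which the hypothesis does not guarantee, so as written the numerology fails for every $n\geq 1$. The fix is purely arithmetic, but since the entire content of the theorem is this numerology, you need to carry it out correctly rather than defer it.
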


\noindent This result follows by induction from the following proposition which is a generalization of \cite[Prop. 16]{Qcat}:

\begin{prop}
\label{prop-dim-connexite}
Let $f:A\to X$ be a cofibration which is an $(r-1)$-equivalence and $n$ be an integer such that $rn\geq 3$. If for $k\geq 0$
one has $Q^k\secat(f)= n$ and $\dim (X) \leq 2rn
 -3+k$ then $Q^{k+1}\secat(f)=n$.
\end{prop}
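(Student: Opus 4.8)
The plan is to reduce the statement, as the authors do for the limit version, to an obstruction-theoretic comparison between the fibration $q^k(g_n(f))$ and its fibrewise stabilization $q^{k+1}(g_n(f))$, using the connectivity of the map $b^k_X:Q^k_X(G_n(f))\to Q^{k+1}_X(G_n(f))$ on fibres together with the dimension bound on $X$. First I would record that since $q^{k+1}\secat(f)\le q^k\secat(f)=n$, it suffices to show $q^{k+1}\secat(f)\ge n$, i.e. that $q^{k+1}(g_n(f))$ cannot admit a homotopy section unless $q^k(g_n(f))$ already did with $n$ replaced by $n-1$; equivalently, that a section of $q^{k+1}(g_n(f))$ lifts back (up to homotopy) through $b^k_X$ to a section of $q^k(g_n(f))$. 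The fibre of $q^k(g_n(f))$ is $Q^k(F_n(f))$ where $F_n(f)$ is the join of $n+1$ copies of the homotopy fibre $F$ of $f$; since $f$ is an $(r-1)$-equivalence, $F$ is $(r-2)$-connected, so $F_n(f)$ is $((n+1)r-2)$-connected, hence $rn-1$-connected at least, and $Q^k(F_n(f))\simeq\Omega^k\widetilde\Sigma^k F_n(f)$ is $((n+1)r+k-2)$-connected. The key input is that the map $b^k:Q^k(F_n(f))\to Q^{k+1}(F_n(f))$, being equivalent to $\Omega^k(\tilde\eta_{\widetilde\Sigma^k F_n(f)})$, is a map whose homotopy fibre is highly connected — by the Freudenthal suspension theorem $\tilde\eta_{\widetilde\Sigma^k F_n(f)}$ is roughly $(2\cdot\text{conn}+1)$-connected, so $b^k$ is approximately $(2(n+1)r+2k-3)$-connected.

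Next I would set up the lifting problem. A homotopy section $\sigma$ of $q^{k+1}(g_n(f)):Q^{k+1}_X(G_n(f))\to X$ is to be lifted, over $\mathrm{id}_X$, through the fibrewise map $b^k_X$ to a homotopy section of $q^k(g_n(f))$. This is a standard obstruction-theory problem: the obstructions to lifting a section of $q^{k+1}(g_n(f))$ through $b^k_X$ to a section of $q^k(g_n(f))$ lie in groups $H^{i+1}(X;\pi_i(\text{hofib of }b^k_X \text{ on fibres}))$ (with the appropriate local coefficients, or after checking $X$ acts trivially, which holds under the connectivity hypotheses, in particular when $Q^k\secat$ being attained forces path-connectedness of the relevant total spaces as used in the proof of Theorem \ref{Cornea}). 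The homotopy fibre of $b^k$ is $N$-connected with $N \approx 2(n+1)r + 2k - 4 \ge 2rn-3+k$ precisely when $\dim(X)\le 2rn-3+k$; so all obstruction groups $H^{i+1}(X;-)$ with $i \le \dim X$ vanish because $\pi_i$ of an $N$-connected space is zero for $i\le N$, and the dimension bound guarantees $\dim X \le N$. Hence the lift exists, giving a homotopy section of $q^k(g_n(f))$.

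The final step is bookkeeping: translate "there is a homotopy section of $q^k(g_n(f))$ refining the given section of $q^{k+1}(g_n(f))$" into the statement $q^{k+1}\secat(f)=n$. Since we already have $q^{k+1}\secat(f)\le n$, we only need the reverse inequality; but if $q^{k+1}\secat(f)=n-1$ then there would be a section of $q^{k+1}(g_{n-1}(f))$, and running the same argument (the dimension bound $2r(n-1)-3+k$ needed is weaker, being implied by $\dim X \le 2rn-3+k$ since $r\ge 1$; one must just keep $r(n-1)\ge$ enough, which is where $rn\ge 3$ is used as a floor to keep connectivities in the Freudenthal range) would produce a section of $q^k(g_{n-1}(f))$, contradicting $Q^k\secat(f)=n$. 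I would mirror the argument of \cite[Prop. 16]{Qcat} closely here, since the only change is the shift of the suspension index from $k$ to $k+1$, which improves every connectivity estimate by one and correspondingly relaxes the dimension bound by one — this is exactly the numerology $2rn-3 \rightsquigarrow 2rn-3+k$.

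The main obstacle I expect is pinning down the precise connectivity of the homotopy fibre of $b^k_X$ fibrewise and checking that the local coefficient systems on $X$ are trivial (or at least that the obstruction groups vanish) — in other words, making the Freudenthal estimate sharp enough that the bound reads exactly $2rn-3+k$ and not something weaker, and handling the fundamental-group action on the fibre $Q^{k}(F_n(f))$. The condition $rn\ge 3$ is there precisely to ensure the base case of the induction sits in the stable range so that these connectivity estimates are valid; I would verify that $Q^{k}(F_n(f))$ and the homotopy fibre of $b^k$ are simply-connected under this hypothesis, so that obstruction theory with untwisted coefficients applies, exactly as in the proof of \cite[Prop. 16]{Qcat}.
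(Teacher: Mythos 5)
Your overall strategy is the paper's: assume $Q^{k+1}\secat(f)\leq n-1$, take a section of the stabilized fibration, lift it back through the fibrewise map $b^k_X$ using Freudenthal plus the dimension bound, and contradict $Q^k\secat(f)=n$. But as written your numerology does not close, for two related reasons. First, the body of your argument sets up the lifting problem for $q^{k+1}(g_n(f))$; lifting a section of that through $b^k_X$ only reproduces a section of $q^k(g_n(f))$, which yields no contradiction. The lifting must be performed on the $(n-1)$st Ganea fibration: a section $\sigma$ of $q^{k+1}(g_{n-1}(f))$ is lifted to a section of $q^{k}(g_{n-1}(f))$. Second, and more seriously, when you do pivot to $g_{n-1}(f)$ in your last paragraph you assert that the required dimension bound is $2r(n-1)-3+k$ and that it is ``implied by $\dim X\le 2rn-3+k$''; that implication is reversed ($2r(n-1)-3+k< 2rn-3+k$, so the hypothesis does \emph{not} give the smaller bound), and if the required bound really were $2r(n-1)-3+k$ the proof would fail. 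The fix is an indexing point you have slipped on: the fibre $F_{n-1}(f)$ of $g_{n-1}(f)$ is the join of $n$ (not $n-1$) copies of the $(r-2)$-connected fibre $F$, hence is $(rn-2)$-connected; then $\widetilde{\Sigma}^kF_{n-1}(f)$ is $(rn-2+k)$-connected, $\tilde{\eta}_{\widetilde{\Sigma}^kF_{n-1}(f)}$ is a $(2rn+2k-3)$-equivalence, and after applying $\Omega^k$ (which costs $k$, a subtraction your estimate ``$2(n+1)r+2k-3$'' omits) the map $b^k$, hence $b^k_X$, is exactly a $(2rn+k-3)$-equivalence. This matches the hypothesis $\dim X\le 2rn-3+k$ on the nose, with no room to spare.

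Two smaller points. The hypothesis $rn\geq 3$ is used precisely to make $F_{n-1}(f)$ simply connected so that the Freudenthal statement (for $(l-1)$-connected $Z$ with $l\geq 2$, $\tilde\eta_Z$ is a $(2l-1)$-equivalence) applies; your reading of it is essentially right. And your appeal to fibrewise obstruction theory with local coefficients is heavier than necessary: since $b^k_X$ is a map over $X$ between fibrations over the same base which is a $(2rn+k-3)$-equivalence on fibres, it is a $(2rn+k-3)$-equivalence of total spaces, and then $[X,-]$ applied to it is surjective because $\dim X\leq 2rn+k-3$; composing the resulting lift $\sigma'$ with $q^{k}(g_{n-1}(f))$ recovers the identity up to homotopy, so $\sigma'$ is a homotopy section and the contradiction follows with no discussion of $\pi_1$-actions.
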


\begin{proof}
Since $Q^{k+1}\secat(f)\leq Q^k\secat(f)= n$, we only have to prove that
$Q^{k+1}\secat(f)\geq n$. Suppose that $Q^{k+1}\secat(f)\leq n-1$ and let $\sigma : X \to
{Q_X^{k+1}}(G_{n-1}(f))$ be a (homotopy) section of
$q^{k+1}(g_{n-1}(f))$. Consider the
following diagram:
$$\xymatrix{
Q^{k}(F_{n-1}(f))\ar[rr]^{b^k} \ar[d]&&  Q^{k+1}(F_{n-1}(f)) \ar[d]\\
Q^{k}_X(G_{n-1}(f))\ar[rr]^{b_X^k} \ar[rd]_{q^ k(g_{n-1}(f))} &&
{Q^{k+1}_X}(G_{n-1}(f)) \ar[ld]^{q^{k+1}(g_{n-1}(f))}\\
&X&
}$$
Since $f$ is an $(r-1)$-equivalence, the fibre of $f$ is $(r-2)$-connected and the space $F_{n-1}(f)$ is $(rn-2)$-connected. The condition $rn\geq 3$
ensures that $F_{n-1}(f)$ is at least 1-connected. Using the fact that, for a $(l-1)$-connected space $Z$ with $l\geq 2$, the coaugmentation $\tilde{\eta}_Z:Z\to \Omega \widetilde{\Sigma} Z$ is a $(2l-1)$-equivalence, we can see that the
 map $b^k: Q^{k}(F_{n-1}(f))\to Q^{k+1}(F_{n-1}(f))$, which is equivalent to $\Omega^k\tilde{\eta}_{\widetilde{\Sigma}^kF_{n-1}(f)}$, is a
$(2rn-3+k)$-equivalence for all $k\geq 0$. As a consequence the map $b_X^k: {Q^{k}_X}(G_{n-1}(f))\to
{Q^{k+1}_X}(G_{n-1}(f))$ is also a $(2rn-3+k)$-equivalence for all $k\geq 0$. Since $\dim (X) \leq 2rn
+k -3$, we obtain a map $\sigma' : X \to
{Q^{k}_X}(G_{n-1}(f))$ such that $b^k_X \circ \sigma' \simeq
\sigma$. This map is a homotopy section of
$q^k(g_{n-1}(f))$ which contradicts the hypothesis $Q^k\secat(f)=
n$. Therefore $Q^{k+1}\secat(f)\geq n$.
\end{proof}

\vspace{0.1cm}
\begin{cor} \label{dimconnectivityTC} Let $X$ be an $(r-1)$-connected CW-complex such that \linebreak $r\TC(X) \geq 3$. If
$\dim (X) \leq \disfrac{2r \TC(X) -3}{2}$ then $Q\TC(X)=\TC(X)$.
\end{cor}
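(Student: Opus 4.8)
The plan is to derive Corollary \ref{dimconnectivityTC} directly from Theorem \ref{dimconnectivity} by taking $f$ to be the diagonal map $\Delta\colon X\to X\times X$, so that $\secat(\Delta)=\TC(X)$ and $Q\secat(\Delta)=Q\TC(X)$. Thus the entire content of the corollary is a translation of the hypotheses: I need to check that the connectivity and dimension hypotheses on $X$ imply the corresponding hypotheses on $\Delta$ appearing in Theorem \ref{dimconnectivity}.

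First I would record the connectivity input. If $X$ is $(r-1)$-connected, then the diagonal $\Delta\colon X\to X\times X$ is an $(r-1)$-equivalence: its homotopy fibre is $\Omega X$, which is $(r-2)$-connected, so $\Delta$ is $(r-1)$-connected. Hence we may apply Theorem \ref{dimconnectivity} with $f=\Delta$ and the same value of $r$. The condition $r\secat(\Delta)=r\TC(X)\geq 3$ is exactly the stated hypothesis $r\TC(X)\geq 3$.

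Next I would handle the dimension condition. Theorem \ref{dimconnectivity} requires $\dim(X\times X)\leq 2r\secat(\Delta)-3$, i.e. $\dim(X\times X)\leq 2r\TC(X)-3$. Since $\dim(X\times X)\leq 2\dim(X)$ (for CW-complexes), it suffices to have $2\dim(X)\leq 2r\TC(X)-3$, that is $\dim(X)\leq \frac{2r\TC(X)-3}{2}$, which is precisely the hypothesis of the corollary. (Here one uses that $\dim(X)$ is an integer, so $\dim(X)\leq \frac{2r\TC(X)-3}{2}$ forces $2\dim(X)\leq 2r\TC(X)-3$ since the right side is odd; in any case $2\dim(X)\le 2\lfloor\frac{2r\TC(X)-3}{2}\rfloor\le 2r\TC(X)-3$.) Then Theorem \ref{dimconnectivity} gives $Q\secat(\Delta)=\secat(\Delta)$, i.e. $Q\TC(X)=\TC(X)$.

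There is really no substantial obstacle here — the only point that requires a moment's care is the passage from the dimension bound on $X$ to the dimension bound on $X\times X$ via $\dim(X\times X)\le 2\dim(X)$, together with the elementary observation that the right-hand side $2r\TC(X)-3$ is an odd integer, so halving and flooring loses nothing. I would write the proof in two or three sentences: invoke $(r-1)$-connectivity of $X$ to get the $(r-1)$-equivalence $\Delta$, note $\dim(X\times X)\le 2\dim(X)\le 2r\TC(X)-3$, and conclude by Theorem \ref{dimconnectivity}.

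\begin{proof}
Apply Theorem \ref{dimconnectivity} to the cofibration $\Delta:X\to X\times X$, for which $\secat(\Delta)=\TC(X)$ and $Q\secat(\Delta)=Q\TC(X)$. Since $X$ is $(r-1)$-connected, the homotopy fibre $\Omega X$ of $\Delta$ is $(r-2)$-connected, so $\Delta$ is an $(r-1)$-equivalence. The hypothesis $r\TC(X)\geq 3$ gives $r\secat(\Delta)\geq 3$. Finally, since $X$ is a CW-complex, $\dim(X\times X)\leq 2\dim(X)$, and as $\dim(X)\leq \frac{2r\TC(X)-3}{2}$ with $2r\TC(X)-3$ an integer, we get $2\dim(X)\leq 2r\TC(X)-3=2r\secat(\Delta)-3$, whence $\dim(X\times X)\leq 2r\secat(\Delta)-3$. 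Theorem \ref{dimconnectivity} now yields $Q\secat(\Delta)=\secat(\Delta)$, that is, $Q\TC(X)=\TC(X)$.
\end{proof}
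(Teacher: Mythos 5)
Your proof is correct and follows exactly the paper's (one-line) argument: apply Theorem \ref{dimconnectivity} to $f=\Delta\colon X\to X\times X$, using that the homotopy fibre $\Omega X$ is $(r-2)$-connected so $\Delta$ is an $(r-1)$-equivalence, and that the dimension hypothesis on $X$ yields $\dim(X\times X)\le 2\dim(X)\le 2r\TC(X)-3$. You merely make explicit the bookkeeping (the factor of $2$ in the dimension bound) that the paper leaves implicit.
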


\begin{proof} If $X$ is $(r-1)$-connected then the diagonal map $\Delta: X\to X\times X$ is an $(r-1)$ equivalence and we can apply Theorem \ref{dimconnectivity}.
\end{proof}

\subsection{Examples and observations}


\begin{enumerate}
\item Let $Y=\R P^6/\R P^2$ and $X=Y\vee Y$. In  \cite{GGV15}, it is proved that $\TC(X)=\nil\ker\cup_X=4$ and that $X$ is a counter-example for the analogue of the Ganea conjecture for $\TC$ since, for any even $n$, $\TC(X\times S^{n})=5<\TC(X)+\TC(S^{n})$. By Theorem \ref{QsecatHsecat}, it is clear that a space $Z$ satisfying $\nil\ker \cup_Z=\TC(Z)$ also satisfies $Q\TC(Z)=\TC(Z)$. Therefore the space $X=Y\vee Y$ satisfies $Q\TC(X)=\TC(X)=4$. The interest of this example is that it shows that the analogue for $\TC$ of the Scheerer-Stanley-Tanr\'e conjecture mentioned in the introduction (which would be $\TC(X\times S^n)=\TC(X)+\TC(S^n)$ if and only if $Q\TC(X)=\TC(X)$) does not hold in general. Actually the analogue of \cite[Thm 11]{Qcat} for topological complexity is not true: we cannot have  $Q\TC(X\times S^n)=Q\TC(X)+Q\TC(S^n)$ because, in that case, it would be true that $Q\TC(X)=\TC(X)$ implies that $\TC(X\times S^n)=\TC(X)+\TC(S^n)$. Note that $Q\TC(S^n)=\TC(S^n)$ since $\nil \ker \cup_{S^n}=\TC(S^n)$.\\

\item Let $X=S^3\cup_{\alpha} e^7$ where $\alpha:S^6\to S^3$ is the Blakers-Massey element. We know by \cite{GC-V} (or \cite{GGV15}) that $\TC(X)=3$. Therefore the condition $\dim (X) \leq \disfrac{2r \TC(X) -3}{2}$ (with $r=3$) is satisfied and $Q\TC(X)=3$ by Corollary \ref{dimconnectivityTC}. This is an example for which $\nil\ker\cup_X < Q\TC(X)$. Actually, using \cite[Theorem 5.5]{GGV15}, we know that for any two-cell complex $X=S^p\cup_{\alpha} e^{q+1}$ such that 
\begin{enumerate}
\item $2p-1<q\leq 3p-3$ 
\item the Hopf invariant of $\alpha$, $H_0(\alpha)\in \pi_q(S^{2p-1})$, satisfies the condition $(2+(-1)^p)H_0(\alpha)\neq 0$ 
\end{enumerate}
we have $\TC(X)\geq 3$. Corollary \ref{dimconnectivityTC} permits us to see that for these spaces we have $Q\TC(X)=\TC(X)$.\\

\item Let $X=S^2\cup_{\alpha} e^{10}$ be the space considered by Iwase in \cite{Iwase} to disprove the Ganea conjecture. It is easy to see that $\nil\ker\cup_X=2$ and, using Hopf invariants, it has been proved in \cite{GGV15} that $\TC(X)\leq 3$. Here we can compute that $Q\TC(X)=2$. Indeed, by Corollaries \ref{firstpropQTC} and \ref{zcl}, we have
$$\nil\ker\cup_X\leq Q\TC(X)\leq Q\cat(X\times X).$$
On the other hand, we know from \cite{SST} (or Theorem \ref{productQsecat}) that $Q\cat(X\times X)\leq 2Q\cat(X)$. We also know that, for this particular space $X=S^2\cup_{\alpha} e^{10}$, we have $Q\cat(X)=1$ (\cite[Ex. 5]{SST}). We therefore can conclude that $Q\TC(X)=2$. More generally, any finite CW-complex $X$ which is a counter-example to the Ganea conjecture satisfies $Q\cat(X)<\cat(X)$ (see below) and therefore $Q\TC(X)\leq 2\cat(X)-2$.\\

\item As mentioned in \cite[Remark 7]{MoyauxV}, using \cite[Theorem 15]{Qcat} (or Theorem \ref{dimconnectivity} above for the cofibration $\ast \to X$),
we can see that, for any path-connected finite dimensional CW-complex $X$, the product $X\times T^p$, where $T^p$ is the $p$-fold product of $p$ circles $S^1$, satisfies $Q\cat(X\times T^p)=\cat(X\times T^p)$ as soon as $p\geq \dim(X)+3$. In a similar way, considering $p$-fold products of the sphere $S^2$, we have:
\vspace{0.3cm}

\begin{itemize}\item If $X$ is a $1$-connected CW-complex then, for any $p\geq \disfrac{2\dim(X)+3}{4}$, the space  
$X\times (S^2)^p$ satisfies $Q\TC(X\times(S^2)^p)= \TC(X\times(S^2)^p)$.
\end{itemize}

\vspace{0.3cm}
\noindent Indeed the space $X\times (S^2)^p$ is $1$-connected so that we can consider $r=2$ in Corollary \ref{dimconnectivityTC}. On the other hand we have $\dim(X\times (S^2)^p)=\dim(X)+2p$ and $\TC(X\times (S^2)^p) \geq 2p$ since $X\times (S^2)^ p$ dominates $(S^2)^ p$ whose topological complexity is $2p$. Then if  $p\geq \disfrac{2\dim(X)+3}{4}$ we get $2(\dim(X)+2p)\leq 8p-3$ and Corollary \ref{dimconnectivityTC} permits us to conclude.\\

\end{enumerate}

\noindent We finish this text with a small remark on the Scheerer-Stanley-Tanr\'e conjecture:\\

\noindent
\textbf{Conjecture. }(\cite{SST}) Let $X$ a finite CW-complex. We have
$\cat(X\times S^ n)=\cat(X)+1$ for all $n\geq 1$ ($X$ satisfies the Ganea conjecture) if and only if $Q\cat(X)=\cat(X)$.\\

As mentioned in the introduction, one direction of the Scheerer-Stanley-Tanr\'e conjecture has been proved in \cite{Qcat}. More precisely, using the fact that the invariant $Q\cat$ satisfies $Q\cat(X\times S^ n)=Q\cat(X) +1$ (\cite[Corollary 12]{Qcat}), we have that $\cat(X\times S^ n)=\cat(X)+1$ holds for all $n\geq 1$ as soon as $Q\cat(X)=\cat(X)$.

In the last item above we recalled that for any path-connected finite dimensional CW-complex $X$, the product $X\times T^p$, where $T^p$ is the $p$-fold product of $p$ circles $S^1$, satisfies $Q\cat(X\times T^p)=\cat(X\times T^p)$ and therefore the Ganea conjecture as soon as $p\geq \dim(X)+3$. We also notice the following consequence of \cite[Corollary 12]{Qcat}:

\begin{itemize}\item If we have $Q\cat(X\times T^p)=\cat(X\times T^p)$ for some $p$ then we have $Q\cat(X\times T^q)=\cat(X\times T^q)$ for all $q\geq p$.
\end{itemize}

\noindent Indeed, suppose that $Q\cat(X\times T^q)=\cat(X\times T^q)$. Then $X\times T^q$ satisfies the Ganea conjecture and, since $X\times T^{q+1}=X\times T^q\times S^1$, we have
$$\cat(X\times T^{q+1})=\cat(X\times T^q)+1=Q\cat(X\times T^q)+1=Q\cat(X\times T^{q+1}).$$
The assertion follows by induction.\\

We now use Theorem \ref{Cornea} to ask a question related to the Scheerer-Stanley-Tanr\'e conjecture:\\

Suppose that $X$ is a counter-example to the Ganea Conjecture and let $p$ be the least integer such that $X\times T^p$ satisfies the Ganea Conjecture. Consider the space $Y=(X\times T^p)/S^1$ and the following diagram where the dotted map is induced by the projection $X\times T^p\to X\times T^{p-1}$:
$$\xymatrix{
S^1\ar[r] & X\times T^p \ar[d]\ar[r]^{{\rm pr}} & X\times T^{p-1}\\
& Y=(X\times T^p)/S^1 \ar@{.>}[ru]
}$$
Since $X\times T^{p-1}$ does not satisfy the Ganea conjecture, we have, by Theorem \ref{Cornea}, 
$$Q\cat(Y)\leq Q\cat(X\times T^{p-1})<\cat(X\times T^{p-1}).$$
On the other hand, since $Y$ dominates $X\times T^{p-1}$ we also have $\cat(Y)\geq \cat(X\times T^{p-1})$. Therefore $Q\cat(Y)<\cat(Y)$.\\

\noindent \textit{Question. } Does $Y$ satisfy the Ganea conjecture?

\noindent Luc\'{\i}a Fern\'andez Su\'arez, \'Area Departamental de Matem\'atica
Instituto Superior de Engenharia, Rua Conselheiro Em\'{\i}dio Navarro, 1, 1959 - 007 Lisboa
Portugal.
{\it{E-mail:}} lsuarez@adm.isel.pt\\

\noindent Lucile Vandembroucq,
Universidade do Minho,
Centro de Matem\'atica,
Campus de Gualtar,
4710-057 Braga,
Portugal.
{\it{E-mail:}} lucile@math.uminho.pt\\

\end{document}